\pgfplotsset{compat=1.14}
\newcommand\R{\mathbb{R}}
\newcommand\Z{\mathbb{Z}}
\newcommand\N{\mathbb{N}}
\newcommand\Lto{L^2}
\newcommand{\norm}[1]{\left\lVert#1\right\rVert}
\newcommand{\rect}{\textnormal{rect}}
\newcommand{\argmax}{\textnormal{argmax}}
\renewcommand\r{\rangle}
 \renewcommand\l{\langle}
 \newcommand{\dsize}{\displaystyle}
\theoremstyle{plain}
\newtheorem{theorem}{Theorem}[section]
\newtheorem{lemma}[theorem]{Lemma}
\theoremstyle{definition}
\newtheorem{definition}[theorem]{Definition}
\theoremstyle{remark}
\newtheorem{remark}[theorem]{Remark}
\renewcommand\r{\rangle}
 \renewcommand\l{\langle}
\title{A Solution for the Greedy Approximation of a Step Function with a Waveform Dictionary
}
\author{
  Jorge Andres Rivero \\
  Dept. of Economics, University of Washington,\\ 
  Savery Hall, 305 Spokane Ln Box 353330, Seattle, WA, 98105, USA\\
  \texttt{jrivero@uw.edu} \\
   \And
  Pierluigi Vellucci \\
  Dept. of Economics, Roma Tre University, \\
  Via Silvio D'Amico 77, 00145 Rome, Italy. \\
  \texttt{pierluigi.vellucci@uniroma3.it} \\
}
\begin{document}
\maketitle

\begin{abstract}
In this paper we consider a step function characterized by an arbitrary sequence of real-valued scalars and approximate it with a matching pursuit (MP) algorithm. We utilize a waveform dictionary with rectangular window functions as part of this algorithm.  We show that the waveform dictionary is not necessary when all of the scalars are {\color{black} either non positive or non negative} and the parameters of a wavelet dictionary on an integer lattice yields a closed-form solution for the initial optimization problem as part of the MP. Additionally, for any real-valued scalar sequence, we provide a solution {\color{black} with a related wavelet dictionary} at each iteration of the algorithm. {\color{black} This allows for practical calculation of the approximating function, which we use to provide examples on simulated and real univariate time series data that display discontinuities in its underlying structure where the step function can be thought of as a sample from a signal of interest.}
\end{abstract}

\keywords{greedy algorithm \and Hilbert space \and wavelet \and step function \and projection pursuit \and time series \and data}

\section{Introduction}

In the analysis of signals, important patterns in a process of interest are obscured by noise. In economics, this can be due to liquidity trades, agent errors, unobserved heterogeneity, etc. Managing noise leads to interesting mathematical problems. An example is ``denoising'' a signal that is a step function (piece-wise constant) with many jumps or discontinuities (also known as breaks and singularities) and contaminated by a noisy process. In this context, the commonly held assumption of stationarity is violated, which results in noisy points being falsely characterized as a discontinuity of the signal. This is common in economic and financial time series where stationarity is usually violated in this manner. As an example, \cite{eraker2003} detail the effects of ignoring discontinuities in asset returns, which misleads forecasting and statistical inference on the jump points. 

One approach to the general problem is to consider a sparse representation where fewer parameters reveal pertinent information, perhaps through decomposing signals over elementary waveforms. A popular way to achieve this is by approximating a signal or function of interest by constructing a linear expansion via a collection of elementary functions. As an example, we can consider wavelets which have the property that they can be combined with known portions of an incomplete signal to extract information from the unknown portions. This is possible because the wavelet will correlate with the unknown signal if it contains information of a similar frequency, as demonstrated in a large number of applications: \cite{HAMM20191317,ROY2019162,BRUNI202073,DINC20114602,YAROSHENKO2015265} to name a few. See \cite{STEPHANE20091} for more on sparse representation techniques.

The thrust of this proposal is to provide a flexible approach that allows for simplified calculations while approximating a step function with rectangular functions, inheriting their desirable properties, of which are correctly identifying the discontinuity points. Our approach is through assuming the signal we are interested in is a step function and forming another step function based on a sample generated by this signal. Then we remove noise from the data-derived step function through sub-class of greedy algorithms known as the \emph{Weak-Orthogonal Greedy Algorithm} (WOGA) or \emph{matching pursuit} (MP); see \cite{TEMALYKOV:7} for a mathematical treatment of these algorithms. The algorithm does this by selecting an elementary function from a highly redundant set that is most collinear to the current iterations step function, producing a residual that is carried over to the next step to once again apply the optimization. An execution of the matching pursuit will then produce an approximate orthogonal basis expansion of the signal of interest after a desired number of iterations with these selected elementary functions. The highly redundant set that is used in the MP is referred to as a dictionary and the choice of one is guided by the sought after properties of the signal. We use the waveform dictionary of a rectangular window function, which contains all of the translations, scalings and modulations of the function.

The waveform dictionary contains the wavelet dictionary (system) and the Gabor system providing a flexible collection of functions with desirable properties from both; see \cite{HEIL:15,christensen2003introduction} for their basis and frame counterparts. They have found use in providing stable and reliable approximations based on their elements that may simplify many problems through their redundancy and localization properties; see \cite{DAUBECHIES:2} and results found in \cite{DECARLIchapter,DECARLI2018186} on the Gabor system. In economics, \cite{RAMSEYZHANG:10} applied the waveform dictionary of a Gaussian window function with a matching pursuit algorithm on S\&P 500 index data to support the assertion that the structure of the data is not necessarily that of a random walk. After obtaining a time-frequency representation of the index, they estimate the energy distribution on the time-frequency plane and observe that there are intense clusters in the distribution, which is not consistent with a uniform/random process.  Other applications of waveform dictionaries in economics and finance are discussed in \cite{RAMSEY:11}. 

Our use of the matching pursuit on scalings, translations, and modulations of a rectangular window function provides much flexibility while being simple to interpret. Using time series as an example, the matching pursuit iteratively optimizes over all scalings and translations, which adaptively finds consecutive time periods that are most similar, net the noise. If the true signal is a step function, it will approximate it and also capture its discontinuities or breaks in the representation. However, our framework is not the first to address this issue. Other step function approximation methods are discussed in the survey by  \cite{little2011}, where they notably relate this problem to $k$-means and \cite{prandoni1999} who provide many techniques including local polynomial approaches, which our method can be considered a ``local'' constant approximation. It is important to mention that the iterative nature of our method over a more redundant set than a basis allows more flexibility than these approaches at the cost of complexity.
 
The property of detecting breaks is closely related to the use of wavelets for detecting discontinuities in signals. \cite{grossmann1988} introduces a method to detect discontinuities through considering local derivatives via fine scales of a Hardy wavelet transform. Related to this, \cite{mallat1992} apply the wavelet transform modulus maximum across the domain to detect singularities (discontinuities) and also describe local regularity.  \cite{wang1995} apply a discretized version Daubechies wavelets to detect breaks in stock market returns by considering maximum modulus of expansion coefficients over all shifts with a fixed fine scale. In fact, our approach maximizes the modulus over any scale and shift of rectangular window functions on the raw form of the data, without the added complexity of smoothness. On the note of lack of smoothness, \cite{cattani2004} explores the use of the Haar wavelet basis to detect breaks, while partitioning the data manually. Our approach is a departure from using a basis and user-specified partitioning of any kind. 

Step function signals and wavelet analysis are quite common in economic and financial time series, as well as the problem of detecting breaks. For wavelets, \cite{pal2017time}  investigate the link between Brent crude oil spot price and the food price index; \cite{umar2021impact} analyzed the impact of COVID-19 on the volatility of commodity prices; \cite{bilgili2020estimation} investigated the long and short term impact of biofuel production on the food prices in United States; \cite{mastroeni2022wavelet} introduced a novel measure based on the wavelet transformation and information entropy to investigate oil-food price correlation and its determinants in the domains of time and frequency. For detecting breaks with wavelets, \cite{wong2001} use wavelets to detect breaks in the conditional variance function as well as the signal itself applying it to study nonlinearities between bearish and bullish markets; and \cite{xue2014} develop hypothesis tests of break points using wavelets. A popular method of approximation (estimation) is done through Markov Regime-Switching models where the signal is specified to follow a mixture of Gaussians governed by the break process; see \cite{kim1999} for a comprehensive text. Our approach does not make an assumption about the break process, only that the signal is (approximately) a step function.

We discuss the main results and their implications in Section \ref{mainresult}. In Section 2 we provide the preliminary information: the explanation of the matching pursuit algorithm and lemmas used for our main results. The proof for the main results are found in Section 3. A simple algorithm is discussed in Section 4 and examples with simulated and real data can be found in Section 5.


\subsection{Main Results}\label{mainresult} 

{\color{black} We follow the convention in \cite{RAMSEYZHANG:10} to model the sample of some signal by a sequence of Dirac mass concentrated on points over intervals of integer length.} Let $\{a_j\}_{j=1}^N$ be a {\color{black} real-valued sequence} and let $\rect(x)$ be a constant function equal to 1 with support $[-1/2,1/2]$. Let $\gamma = (t,\xi,u)\in\R_{>0}\times\R^2=\Gamma$, where $\R_{>0}$ is the set of positive real numbers. Define a step function using the sequence as
	\begin{equation}\label{def-f}
		f(x)=\sum_{j=1}^N a_j\rect(x-j), \quad a_j\in \R\, ,
	\end{equation}
and denote the waveform dictionary as
	\begin{equation}\label{eq:wavdict}
		\mathcal{G}(t,\xi,u)=\{G_\gamma\}_{\gamma\in\Gamma}=\bigg\{\frac{1}{\sqrt{t}}\rect\bigg(\frac{x-u}{t}\bigg)e^{2\pi i\xi x}\bigg\}_{(t,\xi,u)\in\Gamma}    \, .
	\end{equation}

{ \color{black}
The waveform dictionary is a large set that contains more information than required. We start by considering the matching pursuit with the set at its largest and then consider a subset. The first iteration of the matching pursuit algorithm is to maximize $\vert\l f, G_\gamma \r\vert$ over $\gamma\in \Gamma$ where $\l\cdot,\cdot\r$ is the usual inner product of functions. The following tells us that whenever the signal is non positive or non negative, then the first iteration of the matching pursuit has a practical closed-form maximum, which is achieved with $\xi = 0$ and integer $t,u$.
}

\begin{theorem}\label{T-positive}
Let $f$ be defined as in (\ref{def-f}), {\color{black} but with a sequence of scalars that are either non positive or non negative}, and $G_{\gamma}$ defined as in (\ref{eq:wavdict}). Then the function $|\l f, G_\gamma\r|$ attains its maximum when $\xi=0$ and $u$ and $t\geq 1$ are integers. Additionally, the maximum value is
	\begin{equation}\label{main-max}
\max_{0\leq k\leq N-1}\ \max_{1\leq n\leq N-k} \max\left\{ \frac{\sum_{j=n}^{n+k}a_{j}}{\sqrt {k+1 }}  , \ \frac{  \sum_{j=n}^{n+k-1}a_j}{\sqrt {k }} , \frac{  \sum_{j=n-1}^{n+k-1}a_j}{\sqrt {k+1}} \right\} 
	\end{equation}
where n and k are integers.
\end{theorem}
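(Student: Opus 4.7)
My plan is to strip the parameters off one at a time: first eliminate $\xi$ using a triangle-inequality argument that exploits the fixed sign of $f$; then reduce the continuous optimization in $(t, u)$ to a finite one by showing the optimal window endpoints $\alpha := u - t/2$ and $\beta := u + t/2$ must lie on the half-integer knot set of $f$; and finally match the resulting discrete maximum with the expression in (\ref{main-max}). Without loss of generality I assume $a_j \geq 0$ for all $j$, otherwise replace $f$ by $-f$.

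For the first step, $|e^{-2\pi i \xi x}| = 1$ combined with $f \geq 0$ yields
\[
|\langle f, G_\gamma\rangle| \leq \frac{1}{\sqrt{t}} \int_{u-t/2}^{u+t/2} f(x)\, dx = |\langle f, G_{(t, 0, u)}\rangle|,
\]
so the supremum over $\xi$ is attained at $\xi = 0$ and the modulation drops out. Let $F(x) := \int_{-\infty}^x f(y)\, dy$ be the antiderivative of $f$, a continuous piecewise linear function with knots at the half-integers and constant slope $a_j$ on $(j-1/2, j+1/2)$. In the $(\alpha, \beta)$ coordinates the objective becomes $\Phi(\alpha, \beta) := (F(\beta) - F(\alpha))/\sqrt{\beta - \alpha}$.

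To force $\beta$ to a half-integer at the optimum, I would fix $\alpha$ and restrict $\beta$ to a single cell $(q-1/2, q+1/2)$ on which $f \equiv a_q$. Setting $s = \beta - \alpha$ and writing $F(\alpha + s) - F(\alpha) = D + a_q s$ for a constant $D = D(\alpha, q)$, a direct computation factors the derivative as
\[
\frac{d(\Phi^2)}{ds} = \frac{(D + a_q s)(a_q s - D)}{s^2} = \frac{a_q^2 s^2 - D^2}{s^2},
\]
which is a non-decreasing function of $s > 0$ and crosses zero at most once, necessarily from negative to positive. Hence $\Phi^2$ has no interior maximum on the cell and is maximized at one of the half-integer endpoints. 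The symmetric computation in $\alpha$ pins that endpoint as well, and a boundary check (using that $F$ is constant outside $[1/2, N+1/2]$) restricts both endpoints to that interval.

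Writing $\alpha = n_1 - 1/2$ and $\beta = n_2 + 1/2$ for integers $1 \leq n_1 \leq n_2 \leq N$, the corresponding value of the objective is $\bigl(\sum_{j=n_1}^{n_2} a_j\bigr)/\sqrt{n_2 - n_1 + 1}$. Setting $k = n_2 - n_1$ and $n = n_1$ reproduces the first expression inside the braces of (\ref{main-max}); the second and third expressions there are equivalent reparameterizations of the same family of consecutive-block sums. The step I expect to be the main obstacle is this cell-wise derivative analysis, since it is exactly where the sign hypothesis is used: without it, $F(\beta) - F(\alpha)$ could vanish inside a cell and allow genuine interior maxima of $\Phi$, which is precisely the obstruction that prevents a unified closed form in the general signed case handled by the paper's second theorem.
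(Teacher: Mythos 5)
Your argument is correct and arrives at the same discrete maximum, but it executes the key reduction differently from the paper. Both proofs dispose of the modulation identically: the triangle inequality is saturated at $\xi=0$ precisely because $f$ has one sign. For the reduction of the continuous $(t,u)$ optimization to the lattice, the paper fixes the number $k$ of whole cells covered by the window, writes the objective as $\psi_k(s,t)=\frac{1}{\sqrt t}\left|a_{n-1}s+(a_n+\dots+a_{n+k-1})+a_{n+k}(t-s-k)\right|$ on a triangle $T_k$ in the $(s,t)$-plane, and pushes the maximum to the three vertices via convexity in $s$ followed by quasi-convexity of $t\mapsto|At+B|/\sqrt t$ on $[k,k+1]$ (Lemma \ref{Lk}); the three vertices yield the three terms in the braces of (\ref{main-max}) directly. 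You instead pass to endpoint coordinates $\alpha=u-t/2$, $\beta=u+t/2$ and the antiderivative $F$, and show that $\Phi^2=(F(\beta)-F(\alpha))^2/(\beta-\alpha)$ is convex in each endpoint on each half-integer cell, since $\frac{d}{ds}\,\frac{(D+a_qs)^2}{s}=a_q^2-D^2/s^2$ is non-decreasing in $s$. That single convexity computation subsumes both halves of the paper's Lemma \ref{Lk} and avoids the case split over $k$ and the triangle bookkeeping, at the cost of recovering the three-term form only by reparameterization at the end. Your half-integer-endpoint description is also the more accurate account of where the optimum sits: the centre $u$ is an integer only when the window covers an odd number of cells, a point the paper's statement glosses over as well.

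One correction to your closing diagnosis: the sign hypothesis is not what rescues the cell-wise derivative analysis. If $F(\beta)-F(\alpha)=D+a_qs$ vanishes inside a cell, $\Phi^2$ has an interior zero, which is a \emph{minimum}; the convexity of $\Phi^2$ on each cell, and hence the snap-to-lattice conclusion, survives for arbitrary signed sequences — that is exactly the content of Lemma \ref{Lk} and Theorem \ref{T-gen-wavelet}, which carry no sign restriction. The sign hypothesis is consumed entirely in your first step: for mixed signs, $\bigl|\int f(x)e^{2\pi i\xi x}\,dx\bigr|$ over the window can strictly exceed its $\xi=0$ value because modulation can undo cancellation, which is precisely why the general waveform case lacks a closed form while the wavelet case does not.
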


Theorem \ref{T-positive} tells us that in the first iteration the waveform dictionary is more redundant than necessary due to $\xi=0$ in the optimum and that a wavelet dictionary is applicable with non negative or {\color{black} non positive} scalars. The matching pursuit is a greedy algorithm that builds a linear expansion of $f$ in terms of a sequence of optimal $G_\gamma$. {\color{black} Therefore, a result like Theorem \ref{T-positive} is appealing since it establishes that finding the maximum is equivalent to finding the subsequence of consecutive sequence elements that have the largest arithmetic average, scaled by the square root of the length of the subsequence.

It is more complicated to solve for $\argmax_\gamma|\langle f, G_{\gamma} \rangle |$ when $f$ is not necessarily a non negative or {\color{black} non positive} function. For one, the modulation term $\xi$ need not be zero and so the approximate linear expansion will be a complex function. This representation in the time-frequency domain approximates and separates frequencies and time effects locally, which enables spectral analysis type applications such as estimating the energy distribution in this domain; see \cite{RAMSEY:11}. However, this additional complexity does not enjoy the simplified form of (\ref{main-max}) and becomes a poor option if one is not interested in both localizations. }{\color{black}In fact, if interest is solely in the approximation of a real-valued signal in the time domain, then complex dictionaries are at best useless since the formulation with a   real-valued dictionary is equivalent provided the dictionary contains conjugate atoms. If the dictionary does not contain these then the resulting approximation is unreliable and can be improved with a real-valued dictionary that provides a good approximation in the least squares sense.}

{\color{black} The second issue, as will be seen in Section 4, is in calculating the expansion. This requires iterative applications of this maximization problem since the first iteration will necessarily create a residual sequence for the next iteration that includes negative and positive terms, {\color{black} regardless if the original sequence was of one sign. Therefore Theorem \ref{T-positive} can't be used to build an algorithm that simplifies this version of matching pursuit, but it does improve the first step if the condition is met.}

We now consider a wavelet dictionary $\mathcal{G}(t,0,u) = \{G_{\widetilde{\gamma}}\}$ with rectangular window and show that the maximum value in this constraint set is similar to the form of (\ref{main-max}) with any arbitrary real-value sequence.

\begin{theorem}\label{T-gen-wavelet}
Let $f$ be defined as in (\ref{def-f}) and $G_{\widetilde{\gamma}} = G_{(t,0,u)}$ defined as in (\ref{eq:wavdict}). Then the function $|\l f, G_{\widetilde{\gamma}}\r|$ attains its maximum when $u$ and $t\geq 1$ are integers and its maximum value is given by
	\begin{equation}\label{main-max-2}
\max_{0\leq k\leq N-1}\ \max_{1\leq n\leq N-k} \max\left\{ \frac{\vert \sum_{j=n}^{n+k}a_{j} \vert}{\sqrt {k+1 }}  , \ \frac{ \vert \sum_{j=n}^{n+k-1}a_j \vert}{\sqrt {k }} , \frac{\vert \sum_{j=n-1}^{n+k-1}a_j \vert}{\sqrt {k+1}} \right\} 
	\end{equation}
where n and k are integers.
\end{theorem}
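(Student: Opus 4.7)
The plan is to adapt the argument of Theorem~\ref{T-positive} to the sign-indefinite setting, exploiting the fact that restriction to the wavelet subdictionary ($\xi = 0$) removes any complex component from the atoms. Indeed, with $\xi = 0$ the atom $G_{\widetilde{\gamma}}(x) = t^{-1/2}\rect((x-u)/t)$ is real-valued, so
\[
\l f, G_{\widetilde{\gamma}}\r \;=\; \frac{1}{\sqrt{t}}\sum_{j=1}^{N} a_j\,\ell_j(t,u),
\qquad \ell_j(t,u) = \bigl|[u-t/2,\,u+t/2]\cap[j-1/2,\,j+1/2]\bigr|,
\]
is a single real number, and $|\l f, G_{\widetilde{\gamma}}\r|$ is just its absolute value rather than the modulus of a complex linear combination of exponentials.

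The first step is to import the discrete reduction from the proof of Theorem~\ref{T-positive}. For fixed $t\geq 1$ the map $u\mapsto\l f,G_{\widetilde{\gamma}}\r$ is continuous and piecewise linear, with breakpoints exactly where $u\pm t/2$ is a half-integer (i.e., a cell boundary of $f$); on each linear piece the convex function $|\cdot|$ attains its maximum at an endpoint, so an optimal $u$ satisfies $u\pm t/2\in\Z+1/2$. A parallel monotonicity argument in $t$ (using that $t\mapsto \frac{1}{\sqrt{t}}\int_{u-t/2}^{u+t/2}f$ behaves like $C/\sqrt{t}$ on each sub-interval where both window edges sit inside a single cell) then pushes $t$ to a positive integer. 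The crucial observation is that this reduction does not use the sign of the $a_j$'s; in the proof of Theorem~\ref{T-positive} the definite-sign hypothesis was only invoked in the final algebraic step, when the outer modulus was pulled through a sum of same-sign terms.

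Once one is restricted to such cell-aligned $(t,u)$, the inner product admits three combinatorial forms, distinguished by how the support $[u-t/2,u+t/2]$ sits against the cell grid. Writing the leftmost covered cell index as $n$ or $n-1$ and the total number of covered cells as either $k$ or $k+1$, the three configurations evaluate to $\frac{\sum_{j=n}^{n+k}a_j}{\sqrt{k+1}}$, $\frac{\sum_{j=n}^{n+k-1}a_j}{\sqrt{k}}$, and $\frac{\sum_{j=n-1}^{n+k-1}a_j}{\sqrt{k+1}}$, respectively. Because $f$ is no longer assumed to be of one sign, each of these three real numbers may be positive or negative, and we must take the absolute value of each individually before maximizing, producing exactly the triple $\max$ in (\ref{main-max-2}).

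The main obstacle is precisely this sign bookkeeping, which distinguishes the statement from Theorem~\ref{T-positive}: there, the outer modulus collapses trivially onto the triple $\max$ since each inner sum is automatically non-negative or non-positive, whereas here each sum must be enclosed in its own $|\cdot|$. Once this is handled, together with a careful treatment of the degenerate case $k=0$ and of the boundary configurations where the support would spill past $[1/2, N+1/2]$, the formula (\ref{main-max-2}) follows.
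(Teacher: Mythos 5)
Your proposal is correct and follows essentially the same route as the paper, whose own proof simply reruns the computation from Theorem~\ref{T-positive} with $\xi=0$, keeps the outer absolute value, and invokes Lemma~\ref{Lk} (which the paper explicitly notes holds for arbitrary real sequences). You correctly isolate the key point: the sign hypothesis in Theorem~\ref{T-positive} is used only to remove the modulus from the sum of same-sign terms, while the reduction to cell-aligned parameters is sign-free. One step of your write-up is loose, however: in the $t$-reduction you say the objective \emph{behaves like} $C/\sqrt{t}$ and appeal to a monotonicity argument. On a sub-interval where the window edges stay within fixed cells the numerator is an affine function of $t$, not a constant, so the objective has the form $|At+B|/\sqrt{t}$, which is in general neither of the form $C/\sqrt{t}$ nor monotone (it can vanish in the interior of $[k,k+1]$ and rise toward both ends). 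The tool the paper uses in Lemma~\ref{Lk} is that $|At+B|/\sqrt{t}$ is quasi-convex on $[k,k+1]$, being a nonnegative convex function divided by a positive concave one, hence maximized at an endpoint of the interval; substituting that for your monotonicity claim makes your reduction to the three cell-aligned configurations, and hence to (\ref{main-max-2}), exactly the paper's argument.
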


Therefore selecting from the wavelet dictionary leads to the optimal value at each iteration of the matching pursuit, simplifying calculations greatly while incurring a cost excluding local frequency representation. The representation from using the wavelet dictionary is useful to approximate and separate time effects of the signal. This dictionary using the rectangular window will well represent functions that are (approximately) piecewise constant since it matches the functions property of time localization. Even with the simple rectangular window you can achieve good approximations since it can be interpreted as placing individual Dirac masses over the Dirac mass derived from the sample.

 In Section 4 we discuss the computation and the limitations of the step-function approximation using the waveform dictionary as well as introduce a simple algorithm leveraging (\ref{main-max-2}) that calculates the representation in polynomial time. In section 5 we provide some examples of the wavelet version of this matching pursuit with simulated and real data. In particular, we highlight how the discontinuities of the wavelet dictionary with rectangular window functions is well-adapted to time series with several breaks and relate this approach to $k$-means clustering.}


\section{Preliminaries}\label{prelims}
\label{sec:prel}
We denote the set of nonnegative integers as $\N$. We denote the Hilbert space of real-valued functions that are square integrable in the Lebesgue sense by 
	\begin{equation*}
    		L^2(\mathbb R)=\left\{f: \int_\R |f(t)|^2 dt<+\infty\right\}
	\end{equation*}
with inner product and norm that, on $\mathbb R$, are
	\begin{equation*}
   		 \langle f, g \rangle=\int_\R f(t)g(t) dt,\qquad \norm{f}_2=\sqrt{\langle f, f \rangle}\, .
	\end{equation*}
All of the norms will be the $L^2$-norm on $\mathbb R$ hereafter. In the following subsections we formally define the waveform dictionary, present the matching pursuit algorithm, and prove the lemmas that we use for our main results.


\subsection{The Waveform Dictionary and the Matching Pursuit}\label{matchpursuit}

The matching pursuit can be used with any dictionary, however the choice is important. The dictionary determines the functional form that will be approximated, which emphasizes features of the data, and the computational cost of the algorithm. For example, it is known that the computational complexity of the matching pursuit is to the order of an exponential when utilizing the waveform dictionary with a Gaussian window; see~\cite{DMA:9}. We use a waveform dictionary in our matching pursuit.
\begin{definition}
\label{def:waveform_dictionary}
Let $\gamma=(t,\xi,u)\in\Gamma=\R_{>0} \times\R^2$. A \textit{waveform dictionary} $\mathcal{G}$ is a collection of $\Lto(\R)$ functions of the form
\begin{equation*}
G_\gamma(x)=\frac{1}{\sqrt{t}}g\bigg(\frac{x-u}{t}\bigg)e^{2\pi i \xi x}\, ,
\end{equation*}
where $g\in\Lto(\R)$ is called the \emph{window function} that satisfies $\norm{g}_{\Lto(\R)}=1$, $g(0)\neq0$, and the integral of $g$ is nonzero. The function $G_\gamma$ is known as a \emph{time-frequency atom}.
\end{definition}
This dictionary is a redundant set of the modulations, translations, and dilations of a window function $g$. {\color{black} In the subset of $\Gamma$ where $\xi= 0$, we say $\mathcal{G}$ is a \emph{wavelet dictionary}} that comprises translations and dilations of $g$. As previously stated, the choice of $g$ is discretionary and based on what properties of the data set will be reflected in the representation; see \cite{RAMSEY:11} for an example in financial time series where $g$ is chosen to be a Gaussian window function. An advantage of using this dictionary is that it is flexible with functions that are well localized in time and frequency through the mixture of dilation and modulation terms. 

Consider an $N$-term approximation of a function $f\in\Lto(\R)$ using waveform dictionary elements 
	\begin{equation}
		\widehat{f}= \sum_{j=1}^{N} b_j G_{\gamma_j}\, ,    
	\end{equation}
where $b_j\in\R$ and the $b_j$ and $\gamma_j$ are chosen such that $||f-\widehat{f}||_2$ is minimized. We use the matching pursuit to successively apply orthogonal projections of $f$ onto the dictionary elements $G_{\gamma_j}$ to find such $b_j$ and $\gamma_j$. 

Choosing such a $G_{\gamma_0}\in\mathcal{G}$ in our initial iteration, we may decompose $f$ as the so-called \emph{1st order greedy expansion}
	\begin{equation}
		f=\langle f, G_{\gamma_0}\rangle G_{\gamma_0} + Rf\, ,
	\end{equation}
where $Rf$ is the residual after approximating $f$ in the direction of $G_{\gamma_0}$. We have that $G_{\gamma_0}$ is orthogonal to $Rf$ so that
	\begin{equation}
		\norm{f}_2^2= |\langle f, G_{\gamma_0}\rangle|^2 + \norm{Rf}_2^2\, .
	\end{equation}
Observe that
	\begin{equation}
   		 \norm{Rf}_2^2= \norm{f}_2^2 -|\l f, G_{\gamma_0}\r |^2
	\end{equation}
and so $||Rf||_2$ is minimum if and only if $|\l f, G_{\gamma_0}\r |^2$ is maximum. Therefore, we must choose $\gamma_0$ that maximizes $|\langle f, G_{\gamma_0}\rangle|$ or at best the $\gamma_0$ must satisfy
	\begin{equation}\label{conv}
		|\langle f, G_{\gamma_0}\rangle|\geq \alpha_0 \sup_{\gamma\in\Gamma}|\langle f, G_{\gamma}\rangle|\, ,
	\end{equation}
where $0\leq\alpha_0\leq 1$. The last case  applies when the maximum is difficult and unnecessary to calculate, which depends on application of the matching pursuit. This concludes the first iteration and in the following iteration we repeat the procedure with $Rf$ in place of $f$.

We explain inductively the mechanism of the matching pursuit beyond the initial iteration. Suppose that the $n$th order residual $R^nf$ has been found. We choose $\gamma_n\in\Gamma$ in order to maximize $|\langle R^n f, G_{\gamma_n}\rangle|$ or satisfy
	\begin{equation}\label{convmain}
		|\langle R^n f, G_{\gamma_n}\rangle|\geq \alpha_n \sup_{\gamma\in\Gamma}|\langle R^n f, G_{\gamma}\rangle|\, .
	\end{equation}

The residual $R^n f$ is then decomposed as 
	\begin{equation}\label{eq:2.5}
		R^n f = \langle R^n f, G_{\gamma_n}\rangle G_{\gamma_n} + R^{n+1}f  
	\end{equation}
which gives us a relationship between the $n$th order residual and the $(n+1)$-th order residual. Since $R^{n+1}f$ is orthogonal to $G_{\gamma_n}$,
	\begin{equation}
		\norm{R^nf}_2^2=\vert\langle R^n f, G_{\gamma_n}\rangle\vert^2 + \norm{R^{n+1}f}_2^2\, .
	\end{equation}
We will continue this decomposition until order $m\geq n$. Then $f$ can be represented as the concatenated sum
	\begin{equation}\label{eq:2.7}
		f= \sum_{k=0}^{m-1} (R^k f - R^{k+1} f ) +R^m f\, 
	\end{equation}
and combining (\ref{eq:2.5}), (\ref{eq:2.7}), we get the $(m-1)$-th order greedy expansion of $f$:
\begin{equation}
f=\sum_{k=0}^{m-1}\langle R^k f, G_{\gamma_k}\rangle G_{\gamma_k} + R^m f\, .
\end{equation}

As we continue this procedure we obtain a sequence $\{\alpha_0,\dots,\alpha_n,\dots\}$ of constants from (\ref{convmain}); of functions of the dictionary $\{G_{\gamma_0}, G_{\gamma_1},\dots,G_{\gamma_n},\dots\}$; and a sequence of functions $\{f_1,\dots,f_n,\dots\}$ defined by
\begin{equation}
    f_m= f-\sum_{j=0}^{m-1} \l f_j, G_{\gamma_j}\r G_{\gamma_j}
\end{equation} 
for which $ ||f_m||_2$ is as small as possible. In order for  $ \norm{f_m}_2$ to converge to zero, the sequence $\{\alpha_m\}$ must satisfy the conditions as part of the following theorem.
\begin{theorem}\label{convtheorem}
In the class of monotone sequences $\tau=\{t_k\}_{k=1}^\infty$ such that $0\leq t_k\leq 1$, for every $k=1,2,\dots$, the condition 
	\begin{equation} \label{def-converge}
		\sum_{k=1}^\infty \frac{t_k}{k}=\infty
	\end{equation}
is necessary and sufficient for convergence of the Weak Greedy Algorithm (matching pursuit) for each f in any Hilbert space and for any dictionary with elements $\varphi_j$, and
	\begin{equation}
		f=\lim_{n\to\infty}\sum_{j=1}^n\l f_{j-1}^\tau,\varphi_j^\tau\r\varphi_j^\tau \, .
	\end{equation}
\end{theorem}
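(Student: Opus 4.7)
This is a classical theorem of Temlyakov (see \cite{TEMALYKOV:7}), and the plan is to follow his two-direction strategy: establish sufficiency via an energy bookkeeping argument combined with weak compactness, and prove necessity by constructing an explicit counterexample when $\sum_{k} t_k/k < \infty$.

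For sufficiency, I would begin with the Pythagorean identity already present in (\ref{eq:2.5}), namely $\norm{f_{n-1}^\tau}_2^2 - \norm{f_n^\tau}_2^2 = |\l f_{n-1}^\tau, \varphi_n^\tau\r|^2$. Setting $b_n := \sup_{\varphi \in D} |\l f_n^\tau, \varphi\r|$ and combining the identity with the weak selection criterion (\ref{convmain}) yields
$$\norm{f_{n-1}^\tau}_2^2 - \norm{f_n^\tau}_2^2 \geq t_n^2\, b_{n-1}^2 \, .$$
Telescoping gives $\sum_{n\geq 1} t_n^2 b_{n-1}^2 \leq \norm{f}_2^2 < \infty$. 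The non-increasing sequence $\norm{f_n^\tau}_2$ converges to some $a \geq 0$, and the goal is to show $a=0$; I would argue by contradiction, supposing $a>0$.

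The main obstacle is then to produce a lower bound on $b_n$ that is incompatible with both $\sum_n t_n^2 b_{n-1}^2 < \infty$ and the hypothesis $\sum_n t_n/n = \infty$. By weak compactness in $\Lto(\R)$, a subsequence of $\{f_n^\tau\}$ converges weakly to some $g$ with $\norm{g}_2 \leq a$. If $g \neq 0$, density of the span of the dictionary furnishes $\sup_\varphi |\l g, \varphi\r| > 0$, and weak convergence transfers this into a uniform lower bound for $b_n$ along the subsequence, contradicting summability of $t_n^2 b_{n-1}^2$ in conjunction with $\sum t_n/n$ diverging. When $g=0$ one instead exploits the representation $f_n^\tau = f - \sum_{j<n} \l f_{j-1}^\tau, \varphi_j^\tau\r \varphi_j^\tau$ to show $b_n$ cannot decay too fast. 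The delicate step which produces the sharp $\sum t_k/k$ threshold, rather than the cruder $\sum t_k^2$, is a block Cauchy--Schwarz estimate on the cross correlations $\l f_{n-1}^\tau, \varphi_m^\tau\r$ for $m < n$, averaged over consecutive indices using monotonicity of $\tau$; this is the most technical part of the argument.

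For necessity, for each monotone $\tau$ with $\sum t_k/k < \infty$ I would construct a Hilbert space, a dictionary, and a function $f$ for which the algorithm fails to converge. A standard construction lives in $\ell^2$ with dictionary atoms designed so that at step $n$ the weakly selected atom captures only a $t_n$-fraction of the best possible correlation and misses the bulk of the residual. Choosing the coefficients of $f$ to decay slowly relative to $\tau$ keeps the residual norms uniformly bounded below; the condition $\sum t_k/k < \infty$ is exactly what permits such a construction within the monotone class, and then verifying the algorithm's non-convergence is a direct computation.
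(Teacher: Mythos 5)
The paper does not actually prove this theorem: it states it as a known result and defers entirely to the literature, citing \cite{TEMLYAKOV:16} for necessity and \cite{LIVSHITZTEM:18} for sufficiency. So your proposal cannot be matched against an in-paper argument; it can only be judged on its own, and as a self-contained proof it has genuine gaps at exactly the decisive points. In the sufficiency direction, the bookkeeping up to $\sum_n t_n^2 b_{n-1}^2 \leq \norm{f}_2^2$ is correct, but the contradiction you then draw does not close. Weak compactness gives you $b_{n_k}\geq c>0$ only along a subsequence, and $\sum_k t_{n_k}^2 \, b_{n_k-1}^2 \geq c^2\sum_k t_{n_k}^2$ need not diverge even when $\sum_n t_n/n=\infty$ (take $t_n = 1/\log(n+1)$ and a sparse subsequence); since $b_n$ is not monotone, the subsequence bound cannot be upgraded to all $n$. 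The step you flag as ``the most technical part'' --- the block Cauchy--Schwarz estimate on the cross-correlations $\l f_{n-1}^\tau,\varphi_m^\tau\r$ that actually produces the $\sum t_k/k$ threshold rather than an $\ell^2$-type condition on $\tau$ --- is the entire content of the Livshitz--Temlyakov sufficiency proof, and naming it is not the same as supplying it. The case $g=0$ is likewise only gestured at, and it is not the easy case.

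The necessity direction has the same problem in sharper form: ``a standard construction lives in $\ell^2$ with dictionary atoms designed so that\dots'' describes the goal of Temlyakov's counterexample, not the counterexample. The whole difficulty is to exhibit, for an arbitrary monotone $\tau$ with $\sum_k t_k/k<\infty$, a concrete dictionary and element $f$ together with an admissible sequence of weak selections whose residual norms stay bounded away from zero, and to verify admissibility (that each chosen atom really achieves a $t_n$-fraction of the supremum at step $n$). None of that verification appears. In short, your outline correctly identifies the architecture of the known proofs, but both directions terminate precisely where the real work begins; as written this is a survey of the strategy, not a proof. Given that the paper itself treats the theorem as imported, the appropriate fix is either to cite the two references as the paper does or to reproduce the Livshitz--Temlyakov estimate and the Temlyakov construction in full.
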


The proofs of Theorem \ref{convtheorem} are found in \cite{TEMLYAKOV:16} for necessity and \cite{LIVSHITZTEM:18} for sufficiency. The theorem says that if we must settle for small $\alpha$ ($\tau$ in the theorem) in condition (\ref{convmain}), the algorithm may incur a loss of convergence. 

{\color{black}
For the approximation using a waveform dictionary, Theorem \ref{T-positive} does not provide the maximized value for any step beyond the initialization because those residual terms $R^{n}$ for $n>0$ necessarily will contain sequence elements of mixed sign. Therefore, the condition (\ref{convmain}) must be relied upon to deliver convergence guarantees using the potentially suboptimal form (\ref{main-max}), which we do not attempt. However, with a wavelet dictionary, Theorem \ref{T-gen-wavelet} provides the maximum value of each iteration so there is equality in (\ref{convmain}). Then, the sequence of $\{\alpha_m\}$ can be chosen for which (\ref{def-converge}) is the harmonic series giving a convergence guarantee of the algorithm with a wavelet dictionary. 
}

{\color{black}
\begin{remark}
When the scalars $a_j$ are not necessarily  non positive or non negative, an optimal $\xi$ for $\max_{\gamma\in\Gamma}|\langle f,\, G_{\gamma}\rangle|$ can be non zero. Therefore this will result in an approximate complex representation when the data is real-valued. In Definition \ref{def:waveform_dictionary}, the basis function $G_\gamma(\cdot)$ is centered at $u$ and its energy, that is proportional to $t$, is concentrated in a neighborhood of $u$. On the other hand, the Fourier transform of $G_\gamma(\cdot)$ is centered at $\xi$ with energy in a neighborhood of $\xi$ that is proportional to $1/t$. In other words, the choice of a complex-valued function like the $G_\gamma(\cdot)$ allows us to select the appropriate trade-off between resolution in the time and frequency domains, where using a specialized basis may not apply throughout the domain. 

The complex-valued representation via waveforms also takes the information in the signal to the time-frequency domain allowing for estimation of the energy distribution for time-frequency pairs. This allows to analyze patterns of the signal in both these dimensions, see \cite{RAMSEYZHANG:10} for an example. In general, these sorts of transforms are useful for spectral analysis to extract the frequency information from the signal, {\color{black} but not necessarily useful for a representation in the time domain where one is better off using a real-valued dictionary.} As an example, the Morlet wavelet as a window function finds use in applications when it is important to understand if, given two signals, one of them drives the other. Since the Morlet wavelet is a complex function, it is possible to write the wavelet cross spectrum in terms of its phase and modulus and, according to the value of the wavelet phase difference, it is also possible to establish if the series co-move and which series is leading the other one \cite{umar2021impact,mastroeni2022wavelet,pal2017time,bilgili2020estimation}.
\end{remark}
}

\subsection{Useful Lemmas}

The following lemma serves as a model case for Theorem \ref{T-positive} and \ref{T-gen-wavelet}. 

\begin{lemma}\label{L-model-case}
Let $0<t\leq1$ and let $n_0$ be such that $|a_{n_0}|=\max_{0\leq j\leq N} \{|a_j|\}$. Let $f$ be defined as in (\ref{def-f}) and $G_{\gamma}$ as in (\ref{eq:wavdict}). Then
	\begin{equation}
		\max_{\gamma=(t,\xi,u)\in \Gamma} |\langle f,\, G_{\gamma}\rangle| = |a_{n_0}|
	\end{equation}
which is attained when $(t,\xi,u) = (1,0,n_0)$.
\end{lemma}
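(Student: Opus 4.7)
The plan is to exploit the assumption $0 < t \leq 1$, which forces the support $[u-t/2,\, u+t/2]$ of the atom $G_\gamma$ to have length at most $1$, matching the length of each ``tile'' $[j-1/2,\, j+1/2]$ on which $f$ takes the constant value $a_j$. Consequently, the support of $G_\gamma$ can intersect the supports of at most two consecutive terms of the sum defining $f$, which makes a short case analysis possible.

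First I would write out the inner product as
\begin{equation*}
\langle f, G_\gamma \rangle = \frac{1}{\sqrt{t}} \sum_{j=1}^N a_j \int_{[j-1/2,\, j+1/2]\, \cap\, [u-t/2,\, u+t/2]} e^{2\pi i \xi x}\, dx,
\end{equation*}
and distinguish cases by how $[u-t/2,\, u+t/2]$ sits relative to the tiles. If the support of $G_\gamma$ is disjoint from $\bigcup_j [j-1/2,\, j+1/2]$ the inner product vanishes. If it lies entirely in a single tile $[j-1/2,\, j+1/2]$, only one term contributes, and the trivial bound $\bigl|\int_A e^{2\pi i \xi x}\, dx\bigr| \leq |A|$ gives
\begin{equation*}
|\langle f, G_\gamma \rangle| \leq \frac{|a_j|}{\sqrt{t}} \cdot t = |a_j|\sqrt{t} \leq |a_{n_0}|.
\end{equation*}
If instead $[u-t/2,\, u+t/2]$ straddles a boundary $j+1/2$, splitting into pieces of lengths $\alpha$ and $t - \alpha$, then the triangle inequality together with the same oscillatory-integral bound yields
\begin{equation*}
|\langle f, G_\gamma \rangle| \leq \frac{1}{\sqrt{t}} \bigl( |a_j|\alpha + |a_{j+1}|(t-\alpha) \bigr) \leq \frac{|a_{n_0}|\, t}{\sqrt{t}} = |a_{n_0}|\sqrt{t} \leq |a_{n_0}|,
\end{equation*}
where the last step uses $t \leq 1$.

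Finally I would verify that the bound is attained at $(t,\xi,u) = (1,0,n_0)$: for this choice $G_\gamma(x) = \rect(x - n_0)$, hence $\langle f, G_\gamma \rangle = \int_{n_0-1/2}^{n_0+1/2} f(x)\, dx = a_{n_0}$ and so $|\langle f, G_\gamma \rangle| = |a_{n_0}|$. There is no serious obstacle here; the only care needed is to invoke $t \leq 1$ twice, once to limit the support of $G_\gamma$ to at most two consecutive tiles (so the case split is exhaustive) and once to convert the factor $\sqrt{t}$ at the end into slack rather than a loss. The modulation $\xi$ plays no role beyond the trivial bound on $\bigl|\int_A e^{2\pi i \xi x}\, dx\bigr|$, which is why it may be taken to be zero at the maximizer.
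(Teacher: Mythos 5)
Your proposal is correct and follows essentially the same route as the paper: both arguments observe that for $0<t\leq 1$ the support of $G_\gamma$ meets at most two consecutive tiles, apply the triangle inequality with the bound $\bigl|\int_A e^{2\pi i\xi x}\,dx\bigr|\leq |A|$, and use $t\leq 1$ to absorb the resulting factor $\sqrt{t}$, then check attainment at $(1,0,n_0)$. The only difference is cosmetic: the paper first evaluates the oscillatory integrals in closed form (the function $h(\xi,s,t)$ with the sine factors) and then optimizes the piecewise-linear bound $s\mapsto |a_{n-1}|s+|a_n|(t-s)$ over the endpoints, whereas you bound both coefficients by $|a_{n_0}|$ immediately, which is a harmless shortcut.
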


\begin{proof}
Fix $u\in\R$. Let  $n \in\Z$ be  such that
\begin{equation}\label{e-u} 
n-\frac 12\leq  u-\frac t2 \leq n+\frac 12 . 
\end{equation} 
Let ${\displaystyle s= n -u +\frac{t}{2} - \frac{1}{2}}$. Note that $0\leq s\leq t$; see Figure \ref{fig1}.
 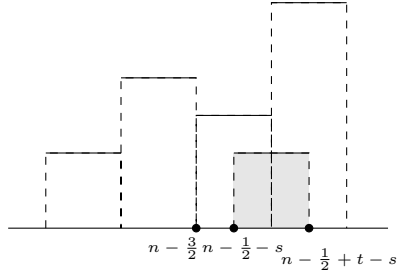
\begin{figure}[hbt!]
\centering
\begin{tikzpicture}
 \fill[gray!20] (1.5,0)--  (1.5, 1)-- (2.5,1)--(2.5, 0);
\draw[black, dashed] (1.5,0)--  (1.5, 1)-- (2.5,1)--(2.5, 0);
 \draw[black] (-1.5,0)--(3.55,0);
  \draw [black, dashed ] (-1,0)--  (-1, 1)--(0,1)--(0, 0)--(0, 2)--(1, 2)--(1,0)--(1, 1.5)--(2,1.5)--(2,0)--(2, 3)--(3,3)--(3,0) ;
   \draw[black] (1.5, 1)-- (2.5,1);
   \draw[black] (-1, 1)-- (0,1);
   \draw[black](0, 2)--(1, 2);
   \draw[black] (1, 1.5)--(2,1.5);
   \draw[black] (2, 3)--(3,3);
 \draw [black,fill]  (1.5,0) circle [radius=0.05];
 \draw (1.62,0) node [black,    below]   {{\tiny $n-\frac 12\!-s $}};
 \draw [black,fill]  (2.5,0) circle [radius=0.05];
 \draw (2.9, -.15) node [black,    below] {{\tiny $ n-\frac 12+t-s $}};
 \draw [black,fill]  (1,0) circle [radius=0.05];
  \draw (.7,0) node [black,    below] {{\tiny $ n-\frac 32 $}};
  \end{tikzpicture}
    \caption{$0<t\leq 1$, the shaded region is the area under the window $G_\gamma$. The steps are of $f$.}
    \label{fig1}
 \end{figure}

We begin by calculating $|\l f, G_\gamma \r |$:
\begin{align}
 \vert\langle f,\, G_{\gamma}\rangle\vert &= \bigg\vert\int_\R \bigg[\sum_{j=1}^Na_j\rect(x-j)\bigg]\bigg[\frac{1}{\sqrt{t}}e^{2\pi i\xi x}\rect\bigg(\frac{x-u}{t}\bigg)\bigg]\, dx\bigg\vert \\ \label{2}
&= \frac{1}{\sqrt{t}}\bigg\vert\sum_{j=1}^Na_j\int_\R \rect(x-j)\rect\bigg(\frac{x-u}{t}\bigg)e^{2\pi i\xi x}\, dx\bigg\vert\\ \label{form}
&= \frac{1}{\sqrt{t}}\bigg\vert a_{n-1}\!\int_{n-\frac 12-s}^{n - \frac{1}{2}}e^{2\pi i \xi x}\, dx + a_{n} \int_{n - \frac{1}{2}}^{n-\frac 12+t-s}e^{2\pi i \xi x} \, dx\bigg\vert\\
   &= \frac{1}{\sqrt{t}}\left|\frac{e^{2\pi i \xi(n-\frac 12)}}{2\pi i \xi }\left( a_{n-1}(1-e^{-2\pi i \xi s})  + a_{n }( e^{ 2\pi i \xi (t-s)}-1)\right)\right|
\\ 
&=\frac{1}{ \pi  |\xi|\sqrt{t}}\left| a_{n-1} e^{ -\pi i s\xi}\sin(\pi \xi s) + a_{n }e^{ \pi i (t-s)\xi}\sin(\pi \xi (t-s))\right|\\ 
&= \frac{1}{ \sqrt{t}}\left| a_{n-1}  \frac{\sin(\pi \xi s)}{\pi  \xi} + a_{n }e^{ \pi it\xi}\frac{\sin(\pi \xi (t-s))}{\pi  \xi}\right|.
\end{align}
where the second to last equality is given by the identity ${\displaystyle \sin(x) = \frac{e^{ix} - e^{-ix}}{2i}}$.

Then we define the function
\begin{equation}\label{def-h2}   h(\xi,\,s,\,t) =\begin{cases} \frac{1}{ \sqrt{t}}\left| a_{n-1}  \frac{\sin(\pi \xi s)}{\pi  \xi} + a_{n }e^{ \pi it\xi}\frac{\sin(\pi \xi (t-s))}{\pi  \xi}\right| &\mbox{ if  $\xi \ne 0$} \cr\cr
  \dsize \frac{  |a_{n-1}s+a_{n } (t-s )|  }{  \sqrt t} & \mbox{ if  $\xi = 0$} \cr\end{cases}
\end{equation}
and show that
 \begin{equation}\label{e-max-2}  
 \max_{{\xi\in\R}\atop{0\leq  s\leq t\leq 1} }  h(\xi, s,t) =  \max\{|a_n|, \ |a_{n-1} |\} .  
 \end{equation}
Firstly we can see that
\begin{equation}\label{part1} 
\max_{0\leq s\leq t\leq 1} h(0,s,t)= \max_{0\leq s\leq t\leq 1}\frac{  |a_{n-1}s+a_{n } (t-s )|  }{  \sqrt t}= \max\{|a_n|, \ |a_{n-1} |\}
\end{equation}  
by noticing that the function  $s\mapsto |a_{n-1}s+ a_{n } (t-s)|$ is  piecewise linear in $[0, t]$ and so it attains its maximum either when $s=0$ or $s=t$. Consequently, we will have that 
	\begin{equation}
		 \max_{\xi\in\R,0\leq  s\leq t\leq 1 }  h(\xi, s,t)\ge \max\{|a_n|, \ |a_{n-1} |\}.
	\end{equation}

To conclude the proof for (\ref{e-max-2}), consider the following inequality where $\xi\neq0$, 
 \begin{align}
 h(\xi, s, t) &=  \frac 1{\sqrt t} \left|a_{n-1} \,  \frac{\sin(\pi \xi s)}{ \pi\xi }+ a_{n }  \frac{\sin(\pi \xi (t-s))}  { \pi\xi   }  \right| \\
 &\leq \frac 1{\sqrt t} (|a_{n-1}|s+ |a_{n }| (t-s  )). 
 \end{align}
Using the same argument as before, we see at once that
 	\begin{equation}
   		\max_{0\leq s\leq t\leq 1} \frac{  |a_{n-1}|s+ |a_{n }| (t-s )   }{  \sqrt t} = \max\{  |a_{n}|,\   |a_{n-1}|\}.
   	\end{equation}
and (\ref{e-max-2}) is proved.

We have shown that 
\begin{equation}
\max\{|a_n|, \ |a_{n-1} |\}\leq\dsize \max_{\xi\in\R,0\leq  s\leq t\leq 1 }  h(\xi, s,t)\leq \max\{|a_n|, \ |a_{n-1} |\}
\end{equation}
and by the definition of $h$,
\[
 	\max_\gamma |\l f,\  G_\gamma\r| \leq  \max_{u}|\l f,\  G_{(1, 0, u)}\r| = \max\{  |a_{n}|,\  | a_{n-1}|\}  
\]
where $n \in\Z$ is such that $n \leq  u  \leq n+1$. 
 If we choose $u=n_0$  as in the statement of the lemma, we have that 
 $\max_{\gamma=(t,\xi,u)\in (0,1]\times\R^2}  |\l f,\  G_\gamma\r| \leq  |\l f,\  G_{(1, 0, n_0)}\r| =   |a_{n_0}|$ as required. 
\end{proof}

{\color{black}

Let  $a_j \in \R$ for all $j= 1, \dots, N$, fix $n = 1,\dots, N$ and let
	\begin{equation}\label{e-defpsi1}
		\psi_k(s,t)=\frac{1}{\sqrt{t}}\left\vert a_{n-1}s + (a_{n }+... + a_{n+k-1}) + a_{n+k }  (t-s-k )\right\vert
	\end{equation}
where $s$ is defined as before.

The following lemma shows the maximum of the function $\psi_k(s,t)$ is in the triangle $T_k$ formed by the lines $s=0$, $s=t-k $ and $t=k+1$, with integer $k\geq 0$.
\begin{lemma}\label{Lk} 
Let $\psi_k(s,t) $ as in (\ref{e-defpsi1}). Then
\begin{equation}\label{e-mk}
 \max_{(s,t) \in T_k} \psi_k(s,t) = \max\{\psi_k(0,\ k),\ \psi_k(0,\ k+1),\ \psi_k(1,\ k+1)\} \, .
\end{equation}
\end{lemma}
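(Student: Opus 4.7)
The plan is to reduce the two-variable maximization of $\psi_k$ on the triangle $T_k$ to its vertices in two stages: first fix $t$ and maximize in $s$, which pins the maximizer to one of the two slanted/vertical edges $\{s=0\}$ and $\{s=t-k\}$; then restrict $\psi_k$ to each edge and maximize in the one remaining variable $t\in[k,k+1]$, showing this univariate max is always at an endpoint.

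For the first stage, I would rewrite the expression inside the absolute value as
\[
L(s,t) := (a_{n-1}-a_{n+k})\,s \;+\; a_{n+k}(t-k)\;+\;\sum_{j=n}^{n+k-1}a_j,
\]
so that $\psi_k(s,t)=|L(s,t)|/\sqrt{t}$. For each fixed $t\in[k,k+1]$, the map $s\mapsto L(s,t)$ is affine, hence $s\mapsto|L(s,t)|$ is piecewise linear on the interval $s\in[0,t-k]$ (the $s$-range determined by $T_k$). Its maximum is therefore attained at one of the two endpoints $s=0$ or $s=t-k$, so it suffices to bound $\psi_k$ on these two edges.

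For the second stage, observe that on each edge $\psi_k$ collapses to a function of the single variable $t$ of the form $h(t)=|c+dt|/\sqrt{t}$ on $[k,k+1]$, with $(c,d)$ depending on the edge: on $s=0$, $(c,d)=\bigl(\sum_{j=n}^{n+k-1}a_j - k a_{n+k},\ a_{n+k}\bigr)$, and on $s=t-k$, $(c,d)=\bigl(\sum_{j=n}^{n+k-1}a_j - k a_{n-1},\ a_{n-1}\bigr)$. The key analytic point is that $h(t)^2=(c+dt)^2/t=c^2/t+2cd+d^2 t$ is convex on $(0,\infty)$, since a direct computation gives $(h^2)''(t)=2c^2/t^3\ge 0$. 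A convex function on a closed interval attains its maximum at an endpoint, so the same holds for $h$ on $[k,k+1]$.

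It then remains to identify the four endpoint values and note they coincide with the three claimed vertices: on $s=0$ the endpoints $t=k$ and $t=k+1$ give $\psi_k(0,k)$ and $\psi_k(0,k+1)$; on $s=t-k$ the endpoints $t=k$ and $t=k+1$ give $\psi_k(0,k)$ (since $s=t-k=0$ there, collapsing to the same vertex) and $\psi_k(1,k+1)$. Taking the maximum of the three distinct values yields (\ref{e-mk}). The only mildly delicate point is the convexity step — once that is in place, the rest is a bookkeeping of which affine function restricted to which edge produces which vertex, and the rest of the proof is essentially routine.
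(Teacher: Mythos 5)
Your proposal is correct, and its overall architecture matches the paper's: both first reduce the maximization in $s$ (for fixed $t$) to the edges $s=0$ and $s=t-k$ by observing that $\psi_k$ is the absolute value of an affine function of $s$, hence convex and maximized at an endpoint of $[0,t-k]$; both then reduce the univariate problem on each edge, where $\psi_k$ has the form $\lvert At+B\rvert/\sqrt{t}$, to the endpoints $t=k$ and $t=k+1$. The one genuine difference is how that second reduction is justified. The paper invokes quasi-convexity: $\lvert At+B\rvert$ is nonnegative convex, $\sqrt{t}$ is positive concave, so their ratio is quasi-convex on $[k,k+1]$ and satisfies $F(\lambda t_0+(1-\lambda)t_1)\leq\max\{F(t_0),F(t_1)\}$. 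You instead square the function and verify convexity of $t\mapsto (c+dt)^2/t = c^2/t+2cd+d^2t$ by the explicit second derivative $2c^2/t^3\geq 0$; since the function is nonnegative, maximizing it is equivalent to maximizing its square, and a convex function on a compact interval peaks at an endpoint. Your route is more elementary and self-contained (no appeal to the convex-over-concave quasi-convexity fact), at the cost of being specific to this particular functional form; the paper's argument is slicker and would survive replacing $\sqrt{t}$ by any positive concave denominator. You are also more careful than the paper in the final bookkeeping, noting explicitly that the $t=k$ endpoint of the edge $s=t-k$ collapses to the vertex $(0,k)$ already obtained from the $s=0$ edge, so only three distinct vertex values appear in (\ref{e-mk}).
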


\noindent Note that this is true for any sequence including those applicable to Theorem \ref{T-positive}. 

\begin{proof}

First we emphasize that $a = a_{n-1}$, $b = a_{n }+... + a_{n+k-1}$, and $c = a_{n-k}$ are scalars. If $a = c = 0$ then (\ref{e-mk}) holds. Suppose at least one of $a \neq 0$ or $c \neq 0$ holds. Then note that the function $s \mapsto \psi_k(s,t)$ is a convex transformation of an affine function, hence it is convex. Since it is also continuous and piece-wise linear it is maximized strictly on the boundary of $0\leq s \leq t - k$ and
\[
	\psi_k(s,t) \leq \max\{\psi_k(0,t), \psi_k(t-k,t)\}.
\]
Then, the functions $t \mapsto \psi_k(0,t)$ and $t\mapsto \psi_k(t-k,t)$ are examples of the function
$
	F(t) = \vert At + B \vert/\sqrt{t}
$
where $A,B \in \R$. For $t \in[k,k+1]$, the function $F_1(t) = \vert At + B \vert$ is a non negative convex function for all $A,B\in \R$ and the function $F_2(t) = \sqrt{t}$ is a positive concave function. Therefore, $F$ is a quasi-convex function on $[k,k+1]$ so, for any $t_0, t_1 \in [k,k+1]$ and any $\lambda \in [0,1]$, $F(\lambda t_0 + (1-\lambda) t_1) \leq \max\{F(t_0), F(t_1)\}$. In particular, $t_0 = k$ and $t_1 = k+1$, hence $F(k + 1 - \lambda ) \leq \max\{F(k), F(k+1)\}$ for all $0 \leq \lambda \leq 1$ so
\[
	\max\{\psi_k(0,t), \psi_k(t-k,t)\} \leq \max\{\psi_k(0,k), \psi_k(0,k+1), \psi_k(1,k +1)\}.
\]

Then, this upper bound on (\ref{e-defpsi1}) is acheived on the vertices of $T_k$.

\end{proof}

}


\section{Proofs of Main Results}

\subsection{Proof of Theorem \ref{T-positive}}
Let $f$ be defined as in (\ref{def-f}), {\color{black} but with scalars that are either non positive or non negative} and $\mathcal{G}$ as in (\ref{eq:wavdict}). Recall the first step is to select $\gamma_0=(t_0, \xi_0, u_0)\in(0,\infty)\times\R^2$ such that $|\langle f,G_{\gamma_0}\rangle|$ is maximized and as a result the norm of the residual $R f = f-\langle f, G_{\gamma_0}\rangle G_{\gamma_0}$ is minimized. We have
\begin{align}\label{eq:pv}
 \langle f,\, G_{\gamma}\rangle &= \int_\R \bigg[\sum_{j=1}^N a_j\rect(x-j)\bigg]\bigg[\frac{1}{\sqrt{t}}e^{2\pi i\xi x}\rect\bigg(\frac{x-u}{t}\bigg)\bigg]  \, dx \notag \\
&= \frac{1}{\sqrt{t}}\sum_{j=1}^N a_j\int_\R \rect(x-j)\rect\bigg(\frac{x-u}{t}\bigg)e^{2\pi i\xi x}  \, dx .
\end{align}

Let  $k  <t\leq k+1$, with $k\ge 1$ (we've seen the case of $k=0$ in Lemma \ref{L-model-case}). Fix $u\in\R$ and choose  $n\in\Z$ for which   
	\begin{equation}\label{deff1}
		\bigg[u-\frac t2, u+\frac t2\bigg]=\bigg[n-\frac 12-s,\ n -\frac 12 + t-s\bigg]
	\end{equation}
with  $0\leq s\leq t-k $; see Fig. \ref{fig2}.
  \begin{figure}[hbt!]
  \centering
     \begin{tikzpicture}

 \fill[gray!20] ( .5,0)--  ( .5, 1)-- (2.5,1)--(2.5, 0);
\draw[black] ( .5,0)--  ( .5, 1)-- (2.5,1)--(2.5, 0);
 \draw[black] (-1.5,0)--(3.55,0);
  \draw [black, dashed] (-1,0)--(-1, 1)--(0,1)--(0, 0)--(0, 2)--(1, 2)--(1,0)--(1, 1.5)--(2,1.5)--(2,0)--(2, 3)--(3,3)--(3,0) ;
   \draw[black] ( .5, 1)-- (2.5,1);
   \draw[black] (-1, 1)-- (0,1);
   \draw[black](0, 2)--(1, 2);
   \draw[black] (1, 1.5)--(2,1.5);
   \draw[black] (2, 3)--(3,3);
 
 \draw [black,fill]  ( .5,0) circle [radius=0.05];
 \draw ( .12,0) node [black,    below]   {{\tiny $n-\frac 12\!-s $}};
 \draw ( 1.2,0) node [black,    below]   {{\tiny $n-\frac 12 $}};
 \draw [black,fill]  (2.5,0) circle [radius=0.05];
 \draw (3, 0) node [black,    below] {{\tiny $ n-\frac 12+t-s $}};
 \draw [black,fill]  (1,0) circle [radius=0.05];
 \draw [black,fill]  (2,0) circle [radius=0.05];
 \draw ( 1.7,-.3) node [black,    below] {{\tiny $ n+k-\frac 12$}};
  \end{tikzpicture}
    \caption{Window function with $1  <t \leq 2$ }
    \label{fig2}
 \end{figure}
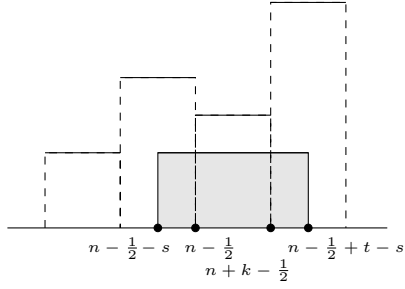
In view of (\ref{eq:pv}), we can write:
 $$
 |\langle f,G_{\gamma }\rangle|=\frac{1}{\sqrt{t}}\bigg\vert a_{n-1}\!\int_{n-\frac 12-s}^{n - \frac{1}{2}}e^{2\pi i \xi x}\, dx + a_{n }\!\int_{n-\frac 12 }^{n + \frac{1}{2}}e^{2\pi i \xi x}\, dx + \dots
 + a_{n+k  } \int_{n + k-\frac 12}^{n -\frac 12 +t-s}e^{2\pi i \xi x} \, dx\bigg\vert.
 $$
 After applying the triangle inequality, which is exact when  $\xi=0$ because the {\color{black} $a_j$'s are of the same sign or zero}, we obtain
 	\begin{equation}\label{psi}
 		|\langle f,G_{\gamma }\rangle| = \frac{1}{\sqrt{t}}\bigg( a_{n-1}s + (a_{n }+... + a_{n+k-1}) + a_{n+k }  (t-s-k )		\bigg)=\psi_k(s,t).
	\end{equation}
 where {\color{black} we retain the notation of $a_j$ despite a potential change of sign}.
 
We now show the maximum of the function $|\langle f,G_{\gamma }\rangle|   = \psi_k(s,t)$ is in the triangle $T_k$ formed by the lines $s=0$, $s=t-k $ and $t=k+1$, with $k\ge 1$. We have:
\[
\max_{\gamma=(t,\xi,u)\in \R^+\times\R^2 } |\l f, G_\gamma\r|= \max_{0\leq k\leq N}  \max_{\gamma \in (k, k+1]\times\R^2 } |\l f, G_\gamma\r|.
\]

Lemma \ref{Lk} shows that when $u\in\R$ and $n\in\Z$ satisfy \eqref{deff1}, the maximum of $ |\l f, G_\gamma\r|$  with $\gamma \in [k, k+1]\times\R^2 $ is as in \eqref{e-mk}.
   
   Our proof shows that $|\l f, G_\gamma\r|$ takes the values in \eqref{e-mk} when $\xi=0$, $u $ is an integer and either $t=k+1$ or $t=k$. From this observation, (\ref{main-max}) follows. 
  \endproof

{\color{black}
\subsection{Proof of Theorem \ref{T-gen-wavelet}}

Let $\{a_j\}_{j=1}^N\subset \R$ and consider $G_{\widetilde{\gamma}} \in \mathcal{G}_{(t,0,u)}$. Take $f$ as in (\ref{def-f}). The proof is nearly identical to the proof of Theorem \ref{T-positive}.
Let  $k  <t\leq k+1$, with $k\ge 1$ since we have done the case for $k = 0$. Fix $u\in\R$ and choose  $n\in\Z$ for which   
	\begin{equation}\label{deff}
		\bigg[u-\frac t2, u+\frac t2\bigg]=\bigg[n-\frac 12-s,\ n -\frac 12 + t-s\bigg]
	\end{equation}
with  $0\leq s\leq t-k $; see Fig. \ref{fig2}.

Since $\xi = 0$, following the proof of Theorem \ref{T-positive} results in a similar calculation to get
 	\begin{align*}\label{psi}
 		|\langle f,G_{\widetilde{\gamma}}\rangle| &= \frac{1}{\sqrt{t}}\bigg\vert a_{n-1}s + (a_{n }+... + a_{n+k-1}) + a_{n+k }  (t-s-k ) \bigg\vert\\
		&=\psi_k(s,t).
	\end{align*}
where the absolute value remains since the sequence may contain negative values. By Lemma \ref{Lk} and similar reasoning to the proof of Theorem \ref{T-positive}, the form (\ref{main-max-2}) follows. 
}

\begin{remark}
{\color{black} The maximization problem with arbitrary real-valued sequence and general waveform dictionary may have optimal modulation parameter not necessarily equal to zero. For example, take a sequence where $a_j=-a_{j-1}$ for each $j=2,\dots,N$. If $a_1=-a\in\mathbb R$ then $a_{2k}=a$, $a_{2k-1}=-a$ and by following (\ref{form}) we get 
\begin{equation}
\label{eq:evaluate}
     \vert\langle f,\, G_{\gamma}\rangle\vert = \frac{|a|}{\sqrt{t}} \left| \sum_{j=1}^N (-1)^j \int_\R \rect(x-j) \rect\bigg(\frac{x-u}{t}\bigg)\, e^{2\pi i\xi x}\, dx\right|.
\end{equation}
Consider the inner integral
\begin{equation}
\label{eq:PHI}
  \Phi(j,\gamma)=  \int_\R \rect(x-j) \rect\bigg(\frac{x-u}{t}\bigg)\, e^{2\pi i\xi x}\, dx.
\end{equation}
Let $\mathcal D_j$ and $\mathcal D_{u,t}$ be the support, respectively, of $ \rect(x-j)$ and $\rect\left(\frac{x-u}{t}\right)$. One of the following four cases applies: (i) $\mathcal D_j \subseteq \mathcal D_{u,t}$;
(ii) $\mathcal D_{u,t} \subseteq \mathcal D_j$;
(iii) $\mathcal D_{u,t} \cap \mathcal D_j\neq \emptyset$ such that $\min \mathcal D_{u,t} \leq  \min \mathcal D_j\leq\max \mathcal D_{u,t} \leq  \max \mathcal D_j$; and (iv) $\mathcal D_j\cap \mathcal D_{u,t} \neq \emptyset$ such that $\min \mathcal D_j \leq  \min \mathcal D_{u,t}\leq\max \mathcal D_j  \leq  \max \mathcal D_{u,t}$.

The function $\Phi$ in these cases can be simplified as follows.}

\textcolor{black}{
\hspace{-15pt}Case (i): $t\geq 1,\ j+\frac12(1-t)\leq u\leq j+\frac12(t-1)$
\begin{equation}
  \Phi(j,\gamma)=  \int_{j-\frac{1}{2}}^{j+\frac{1}{2}} e^{2\pi i\xi x}\, dx 
\end{equation}
Case (ii): $t\leq 1,\ j+\frac12(t-1)\leq u\leq j+\frac12(1-t)$
\begin{equation}
  \Phi(j,\gamma)=  \int_{u-\frac{t}{2}}^{u+\frac{t}{2}} e^{2\pi i\xi x}\, dx
\end{equation}
Case (iii): $t\geq 1,\ j-\frac12(t+1)\leq u\leq j+\frac12(1-t)$
\begin{equation}
  \Phi(j,\gamma)=  \int_{j-\frac{1}{2}}^{u+\frac{t}{2}} e^{2\pi i\xi x}\, dx 
\end{equation}
Case (iv): $t\leq 1,\ j+\frac12(1-t)\leq u\leq j+\frac12(t+1$
\begin{equation}
  \Phi(j,\gamma) =  \int_{u-\frac{t}{2}}^{j+\frac{1}{2}} e^{2\pi i\xi x}\, dx
\end{equation}
We can rearrange them by considering separately $t\leq1$ and $t\geq1$. Then we have, for $t\leq 1$,
\begin{equation}\label{phi1}
\Phi(j,\gamma)=
\begin{cases}
\int_{j-\frac{1}{2}}^{u+\frac{t}{2}} e^{2\pi i\xi x}\, dx,\quad j-\frac12(t+1)\leq u\leq j+\frac12(t-1)\\
\int_{u-\frac{t}{2}}^{u+\frac{t}{2}} e^{2\pi i\xi x}\, dx, \quad j+\frac12(t-1)\leq u\leq j+\frac12(1-t)\\
\int_{u-\frac{t}{2}}^{j+\frac{1}{2}} e^{2\pi i\xi x}\, dx, \quad j+\frac12(1-t)\leq u\leq j+\frac12(t+1)
\end{cases}
\end{equation}
and, for $t\geq 1$,
\begin{equation}\label{phi2}
\Phi(j,\gamma)=
\begin{cases}
\int_{j-\frac{1}{2}}^{u+\frac{t}{2}} e^{2\pi i\xi x}\, dx,\quad j-\frac12(t+1)\leq u\leq j+\frac12(1-t)\\
\int_{j-\frac{1}{2}}^{j+\frac{1}{2}} e^{2\pi i\xi x}\, dx, \quad j+\frac12(1-t)\leq u\leq j+\frac12(t-1)\\
\int_{u-\frac{t}{2}}^{j+\frac{1}{2}} e^{2\pi i\xi x}\, dx, \quad j+\frac12(t-1)\leq u\leq j+\frac12(t+1).
\end{cases}
\end{equation}
}

\textcolor{black}{Let $u=j+\Delta_t$ and, for simplicity, $N=2$.}
\textcolor{black}{
Then, if $t\geq 1$ then we have (\ref{phi1}) so (\ref{eq:evaluate}) becomes:
\begin{equation}
\label{eq:fGgamma1}
\vert\langle f,\, G_{\gamma}\rangle\vert =
\begin{cases}
2\frac{|a|}{\sqrt{t}} \left| \frac{\sin\pi\xi }{\pi \xi} \right|\cdot \left|\sin \left(\frac{\pi}{2} (1 + 2 \Delta_t + t) \xi\right) \right|,-\frac12(t+1)\leq \Delta_t\leq \frac12(1-t)\\
2\frac{|a|}{\sqrt{t}} \frac{\sin^2\pi \xi}{|\pi \xi|}, \quad \frac12(1-t)\leq \Delta_t\leq \frac12(t-1)\\
2\frac{|a|}{\sqrt{t}} \left|\frac{\sin\pi\xi}{\pi \xi}\right|\cdot \left|\sin \left(\frac{\pi}{2} ( 2 \Delta_t -1- t) \xi\right) \right|, \quad \frac12(t-1)\leq \Delta_t\leq \frac12(t+1).
\end{cases}
\end{equation}}

\textcolor{black}{
On the other hand, if $t\leq1$ then we have  (\ref{phi2}) so (\ref{eq:evaluate}) becomes:
\begin{equation}
\label{eq:fGgamma2}
\vert\langle f,\, G_{\gamma}\rangle\vert =
\begin{cases}
2\frac{|a|}{\sqrt{t}} \left| \frac{\sin\pi\xi }{\pi \xi} \right|\cdot \left|\sin \left(\frac{\pi}{2} (1 + 2 \Delta_t + t) \xi\right) \right|,\quad -\frac12(t+1)\leq \Delta_t\leq \frac12(t-1)\\
2\frac{|a|}{\sqrt{t}} \left| \frac{\sin\pi\xi }{\pi \xi} \right|\cdot\left| \sin\pi\xi t \right|, \quad \frac12(t-1)\leq \Delta_t\leq \frac12(1-t)\\
2\frac{|a|}{\sqrt{t}} \left|\frac{\sin\pi\xi}{\pi \xi}\right|\cdot \left|\sin \left(\frac{\pi}{2} ( 2 \Delta_t -1- t) \xi\right) \right|, \quad \frac12(1-t)\leq \Delta_t\leq \frac12(t+1).
\end{cases}
\end{equation}
We observe that: 1. in (\ref{eq:fGgamma2}), $\lim_{t\to0}\vert\langle f,\, G_{\gamma}\rangle\vert=0$; 2. if $\xi=0$, then  $\vert\langle f,\, G_{\gamma}\rangle\vert =0$ in (\ref{eq:fGgamma1}) and (\ref{eq:fGgamma2}). Therefore, $\vert\langle f,\, G_{\gamma}\rangle\vert$ attains its global minimum for $\xi=0$, for all $t>0$. In other words, $\vert\langle f,\, G_{\gamma}\rangle\vert$ is not necessarily maximized when the modulation parameter $\xi = 0$. In fact, if we take $t=2$ and $u$ to be an integer it can be shown that the function $\vert\langle f,\, G_{\gamma}\rangle\vert$ is zero at only countably many values of $\xi$.
}
\end{remark}


 \section{Computational Approach}

{\color{black}

With a full waveform dictionary,  the maximum of $|\langle Rf^n, G_{\gamma} \rangle |$ when $R^nf$ is of mixed sign is not given by Theorem \ref{T-positive}. In fact, after any iteration where we apply Theorem \ref{T-positive} to a non negative or non positive $f$, the residual function $Rf$ (which itself is a step function) used for the next iteration will contain steps of mixed sign. Indeed, the maximized value from the solution will be larger than the average of consecutive sequence elements so the difference will be necessarily of opposite sign on the support of some rectangles and their coefficients. Therefore, Theorem \ref{T-positive} will not apply after the first iteration. We can select a suboptimal $\gamma$ in each iteration that will mimic the form of (\ref{main-max}) and hopefully satisfy (\ref{convmain}) for large enough $0\leq\alpha\leq1$. However, this excludes modulation so we are better off with considering the wavelet dictionary and applying Theorem \ref{T-gen-wavelet}. Nonetheless, the first iteration of the greedy algorithm is simplified provided the sequence of scalars is non positive or non negative.

Now we will utilize Theorem \ref{T-gen-wavelet} to build a simple algorithm that approximates step functions based on real-valued sequences with a wavelet dictionary. Starting with the initial step, consider the subset of the power set of the sequence $\{a_j\}_{j=1}^N$ that contains all subsequences of all lengths such that the terms are consecutive. For example, the sequence $\{2,5,-1\}$ has such subsequences $\{2,5\}$, $\{5,-1\}$, the three singletons, and the entire sequence itself. Then, take the absolute value of the sum of all the elements in each of these subsequences and divide these sums by the square root of the lengths of their corresponding subsequence. Finally, select the maximum of these values and it will be \eqref{main-max-2}.

}

With this procedure, we can recover the $\widetilde{\gamma}_0^*$ for the approximation. The length of the subsequence corresponding to the largest value is $t_0^*$ and, depending on whether $t_0^*$ is even or odd, the $u_0^*$ is the index of the middle term of the sequence or the index of one of the terms of the subsequence tied to be in the middle (ordered according to the index $j\in\N$). This is equal to the maximizer $\widetilde{\gamma}^*$ of (\ref{main-max-2}).

The coefficient $\langle  f , G_{\widetilde{\gamma}_0^*} \rangle$ of the expansion is the sum taken without the absolute value divided by the square root of $t^*$ (could be $k$ or $k+1$ for some integer $k$). Without loss of generality, assume it is the left-most term in the braces of (\ref{main-max-2}). Then, the 1st order greedy expansion of $f$ is 
\begin{align}
f(x) &= \l f(x), G_{\widetilde{\gamma}_0^*} \r G_{\widetilde{\gamma}_0^*} + Rf(x) \\
&=  \bigg(\frac{1}{t_0^*}\sum_{j = n_0^*}^{n_0^* + t_0^*-1} a_j \bigg) \rect\bigg(\frac{x - u_0^*}{t_0^*}\bigg) + Rf(x),
\end{align}
which is a step function over $[n_0^*, n_0^* + t_0^* - 1]$ with one step at the average of the points in the specified interval plus the residuals of taking this approximation.
Then, set $Rf(x) = f(x) - \langle f(x), G_{\gamma_0^*} \rangle G_{\gamma_0^*}$, which is another step function with generic coefficient $R a_j$. Repeat the procedure with $Rf$.

At the $m$th iteration, we consider that same subset of the power set, but of the transformed sequence $\{R^m a_j\}$, which is the original sequence that has gone through $m$ iterations of transformations. Using the same reasoning, we may recover a $\widetilde{\gamma}_m^*$ that results in (\ref{main-max-2}).

If we stop the algorithm after the $m$th step, we are left with 
\begin{equation}
f(x) = \sum_{i = 0}^{m} \langle R^m f(x) , G_{\widetilde{\gamma}_i^*} \rangle G_{\widetilde{\gamma}_i^*} + R^{m+1} f(x) 
= \sum_{j=1}^N R^m a_j \rect(x - j) + R^{m+1} f(x),
\end{equation}
that is, the algorithm shifts the sequence elements depending on the subsequences they belong in as the differences are taken on the scalars and averages resulting from the procedure outlined previously. Dropping the residual term gives us an approximation of $\widehat{f}$ of $f$ based on these shifted sequence elements
\begin{equation}
\hat{f}(x) =  \sum_{j=1}^N R^m a_j \rect(x - j) 
=  \sum_{i = 0}^{m} \langle R^m f(x) , G_{\gamma_i^*} \rangle G_{\gamma_i^*} ,
\end{equation}
which is what we were after. 

{\color{black}
\begin{remark}

This proposed algorithm converges in polynomial time. Recall that the maximized value of $|\langle f, G_\gamma\rangle|$ is some scaled consecutive sequence average.
For each term $a_j$ there are $N$ possible consecutive sequential averages you can calculate which requires approximately $N + (N-1) + \dots + 2 + 1 = N(N+1)/2 \approx N^2$ steps e.g. the sequence containing all elements is $N$ steps for its average, the sequence just containing $a_j$ requires only to record the value. While constructing the set you can compare each new average calculated which adds $N-1$ steps, which is of lower order than the $N^2$. Then we must repeat this $N$ times for each $a_j$ so that the procedure is $O(N^3)$. Finally, for a full run of matching pursuit, we select the number of iterations $m$ so that the order is $O(m\cdot N^3).$
\end{remark}

}


 \section{Empirical Illustration}\label{app} 
 
 In this section, we demonstrate how well the matching pursuit algorithm with a wavelet dictionary of rectangular functions can fit data through examples from simple simulations and time series data. Throughout in the plots, the approximated step function is easy to see as the horizontal red line juxtaposed on the data points. {\color{black} We aim to provide some intuition with these examples   and the fit we are assessing with our method is on the conditional mean of the process our time series is generated by. In addition, the simulations will highlight the potential of our algorithm to approximate time series with discontinuities (breaks) in a conditional expectation function that is (approximately) a step function. }

\subsection{Simulations}

{\color{black}
For the first, we let random process $Y_t$ conditional on a discrete break variable $S_t$ follow a normal distribution:
\[
	Y_t\vert S_t \sim Normal\left( -0.5 I\{S_t = 1\} + 0.1 I\{S_t = 2\}  + 0.5 I\{S_t = 3\}, 0.01\right)
\]
 where $I\{\cdot\}$ is the indicator function and the transition probability matrix for $S_t$ is
\[
\begin{bmatrix}
0.98 & 0.2 & 0 \\
0.005 & 0.98 & 0.015\\
0.02 & 0.08 & 0.90
\end{bmatrix}.
\]
Here the rows represent the current state and the columns represent the next state so for example the probability that $S_{t + 1} = 3$ while $S_t = 2$ is 0.015. Therefore there is some persistence in states $S_t$ so breaks happen slightly infrequently.

In Figure \ref{fig:sim1}, we have randomly generated 250 observations from this distribution and see that our approximation (red) after 11 iterations is able to come close to the true conditional mean (blue shaded line). Notably, it is able to correctly detect the ``spike'' towards the end of the domain as an instance of a true break in the time series.
\begin{figure}[hbt!]
	\centering
	\includegraphics[scale = 0.25]{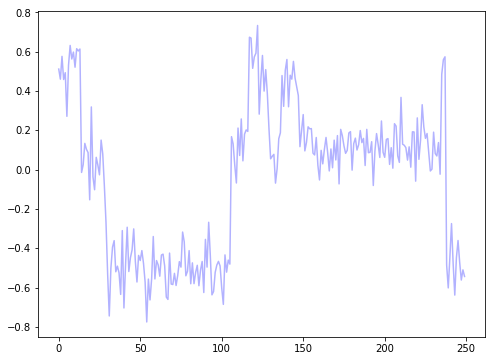}
	\includegraphics[scale = 0.4]{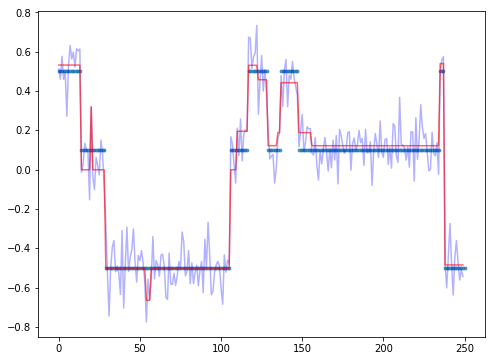}
   \caption{Left: generated time series, Right: approximation on the simulated time series. The red line is the approximation and the horizontal blue lines are the true mean of the process.}
  \label{fig:sim1}
\end{figure}

For our second example we consider
\[
	Y_t\vert S_t \sim Normal\left( -0.4 I\{S_t = 1\} - 0.1 I\{S_t = 2\}  + 0.1I\{S_t = 3\} + 0.4 I\{S_t = 4\}, 0.01\right)
\]
with transition probabilities
\[
\begin{bmatrix}
0.98 & 0.2 & 0 & 0\\
0.02 & 0.95 & 0.03 & 0\\
0 & 0.02 & 0.97 & 0.01\\
0.01 & 0 & 0.02 &0.97
\end{bmatrix}.
\]
In Figure \ref{fig:sim2} we sample 600 observations from this distribution and see the approximation after 21 iterations closely following the conditional mean, although it does not always correctly detect breaks, but it is not too far off, notably a small amount of time after $t=100,200$.
\begin{figure}[hbt!]
	\centering
	\includegraphics[scale = 0.25]{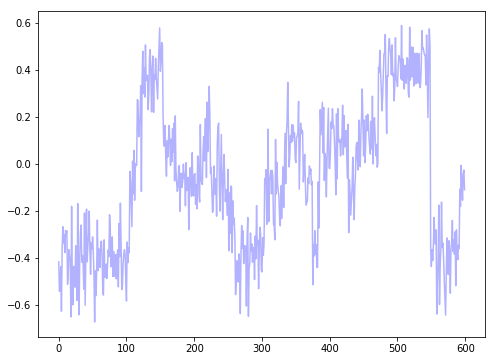}
	\includegraphics[scale = 0.4]{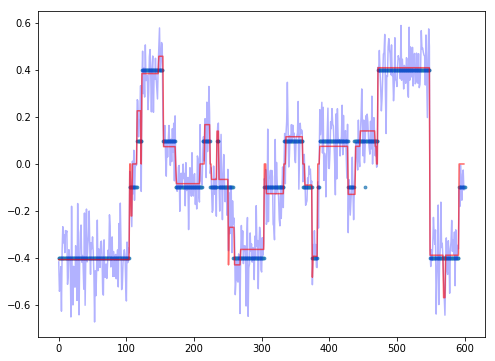}
    \caption{Left: generated time series, Right: approximation on the simulated time series. The red line is the approximation and the horizontal blue lines is the true mean of the process.}
  \label{fig:sim2}
\end{figure}
}

The next simulation is from a set of 500 randomly generated observations from a normal distribution with mean 2 and variance 1 with no breaks. In Figure \ref{fig:normal1},  with one iteration the algorithm fits a step function across the support observations along 2, which is expected since this is the mean.
\begin{figure}[hbt!]
	\centering
	\includegraphics[scale = 0.4]{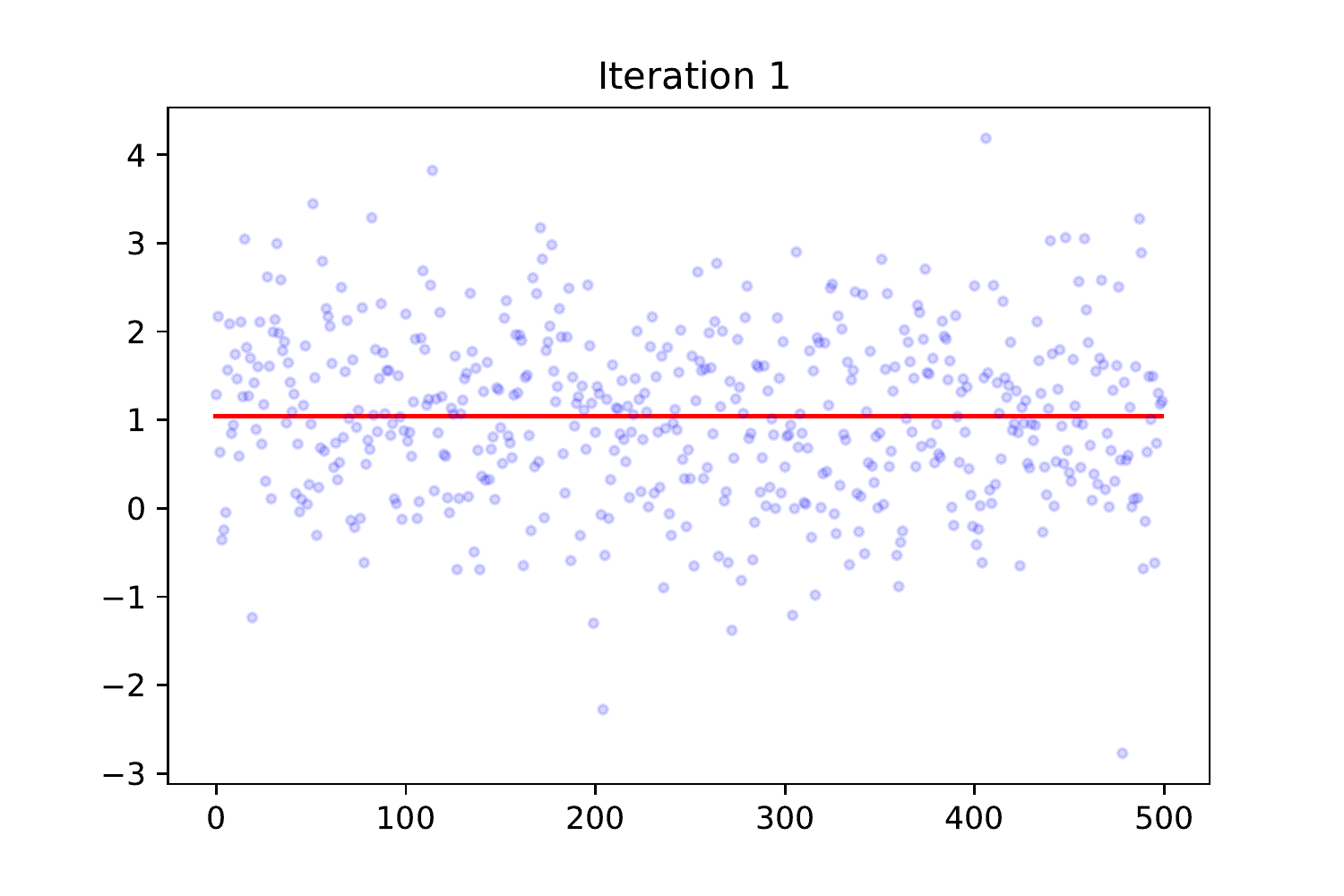}
   \caption{A fit on a sequence of observations from a normal distribution with mean 2 and variance 1.}
  \label{fig:normal1}
\end{figure}
There is an issue that can be seen through \eqref{main-max}. With a standard normal distribution, we should expect a step function across the entire support on zero. However, this always fails since any nonzero element among singleton subsequences will be selected over the ideal step function across the entire support on zero. A simple fix would be to shift the data by constant to avoid this issue. See Figure \ref{fig:normalissue} for an example with 500 randomly generated points.
\begin{figure}[hbt!]
	\centering
	\includegraphics[scale = 0.4]{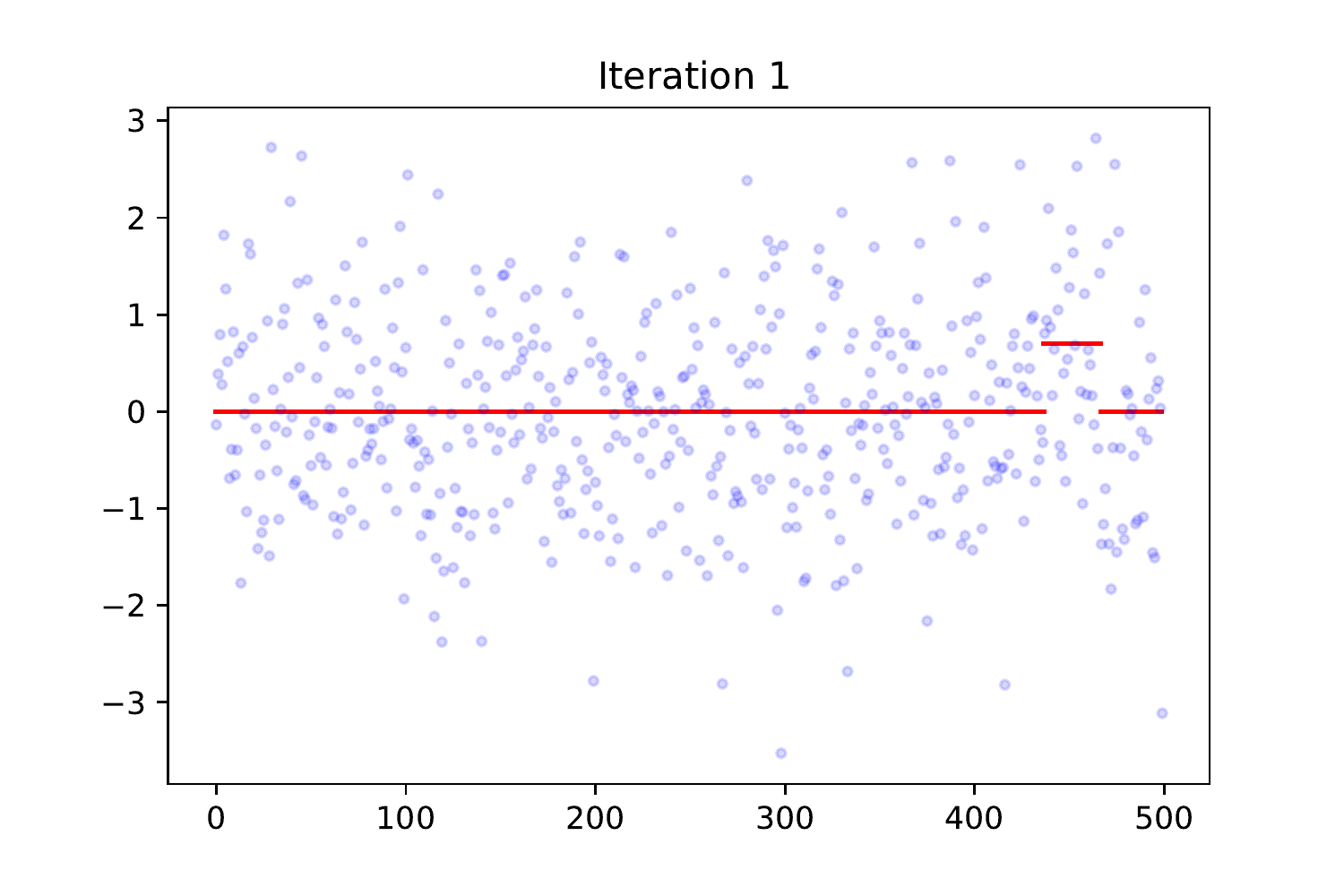}
   \caption{A fit on a sequence of observations from a standard normal distribution.}
  \label{fig:normalissue}
\end{figure}

{\color{black}

In Figure \ref{fig:sim3}, we generate a times series with 100 time periods from a mean-centered AR(2) process with both lag coefficients as 0.3 and standard normal noise term and compare the true conditional mean with the step function approximation after 17 iterations. This is an example of a stationary process and we see that the approximation is susceptible to noisier periods, but otherwise can follow the general tendency of the process albeit missing important persistence information.

\begin{figure}[hbt!]
	\centering
	\includegraphics[scale = 0.5]{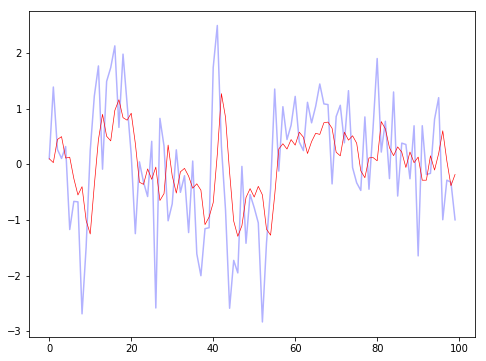}
	\includegraphics[scale = 0.5]{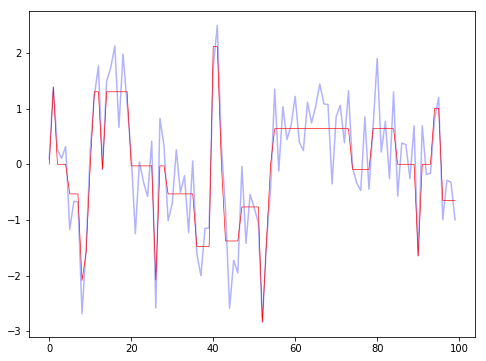}
    \caption{Top: AR(2) TRUE conditional expectation with both lag coefficients as 0.3 (red), Bottom: Step function approximation on the simulated time series. The red line is the approximation and the horizontal blue lines is the true mean of the process.}
  \label{fig:sim3}
\end{figure}

It is worth mentioning that this approximation method is related to simple $k$-means clustering as a way to detect latent structure in the process such as the $S_t$ in the previous examples. In principle, whenever our approximation detects a break it is signaling a change of the time series into a new cluster. This effectively clusters the observations across iterations. However, the main difference is that this approach does not assume the number of clusters and instead searches for consecutive data points that have a significant time mean as a basis to form clusters. This method is highly adapted to this application due to the choice of rectangular window function being able to detect these breaks. 

In Figure \ref{fig:kmeans} we compare our approach as a clustering method to $k$-means assuming the true number of clusters is two where the clusters are normal distributions with mean 0.2 and -0.2. The transition probabilities are symmetric and the probability of remaining in any cluster is 0.97 so the probability of exiting is 0.03. We see that our method is detecting breaks, but it may at times be incorrect, while $k$-means is missclassifying some points, which is expected. A potential advantage of matching pursuit if you are interested in the mean is its ability to mitigate some approximation error since it is directly approximating the mean without knowledge of the number of groups while simple $k$-means is constrained to make an assignment into one of two clusters, potentially including outliers into the mean calculation within clusters. 
\begin{figure}[hbt!]
	\centering
	\includegraphics[scale = 0.5]{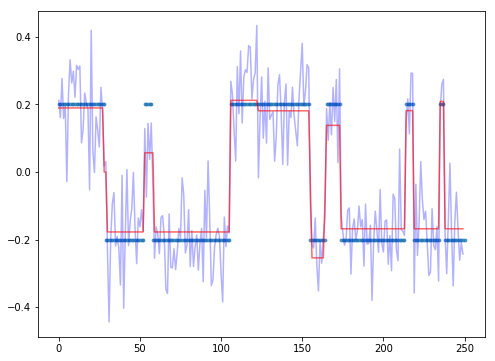}
	\includegraphics[scale = 0.5]{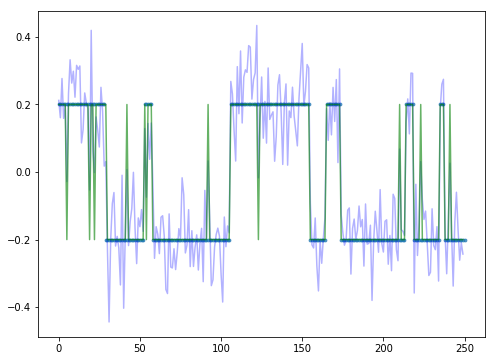}
   \caption{Top: Matching pursuit approximation of the mean. Bottom: $k$means clustering assignments (green), where spikes indicate an incorrect assignment not to be confused with the mean approximation}
  \label{fig:kmeans}
\end{figure}
}

 \begin{figure}[hbt!]
\centering
  \includegraphics[scale=.33]{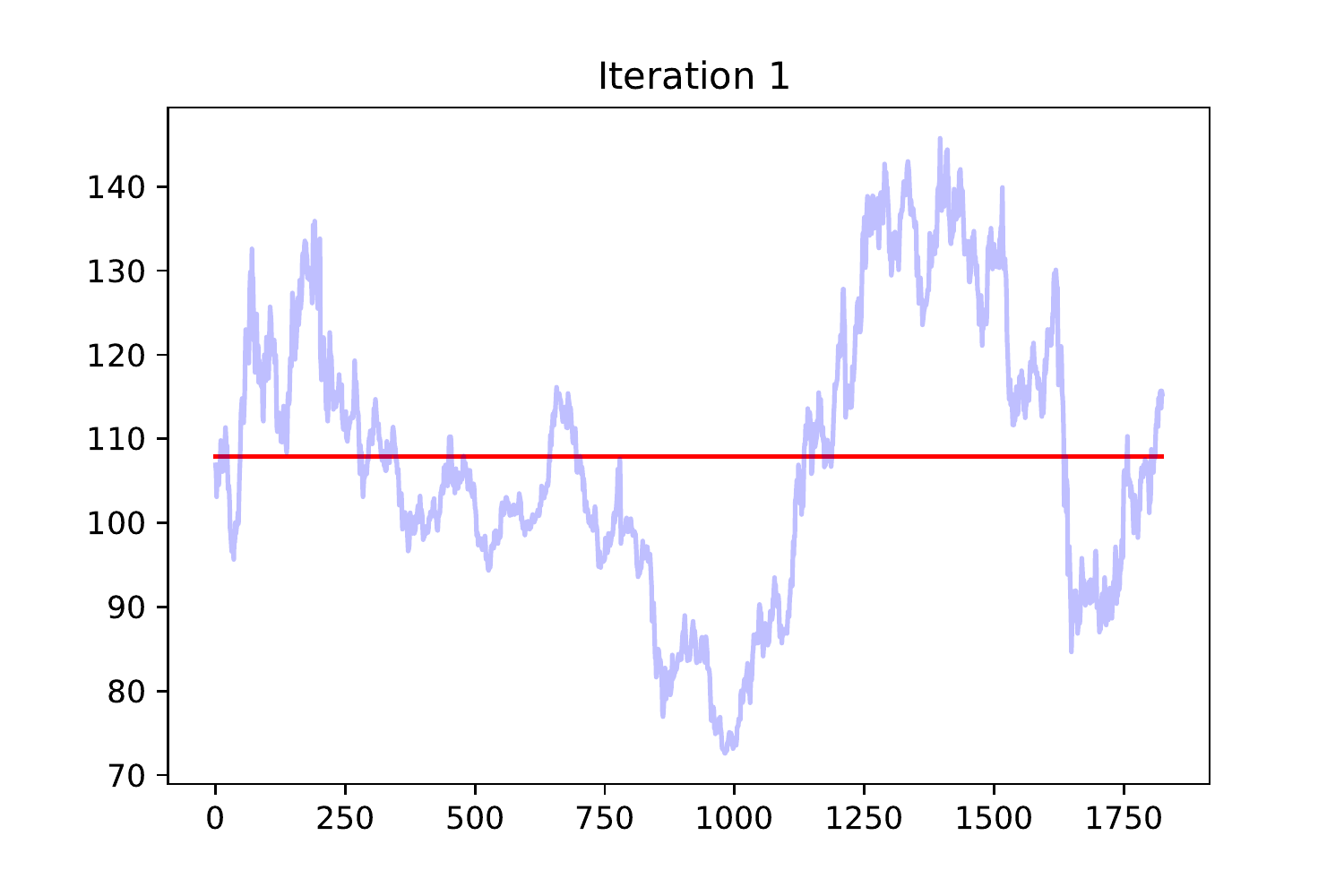}
  \includegraphics[scale=.33]{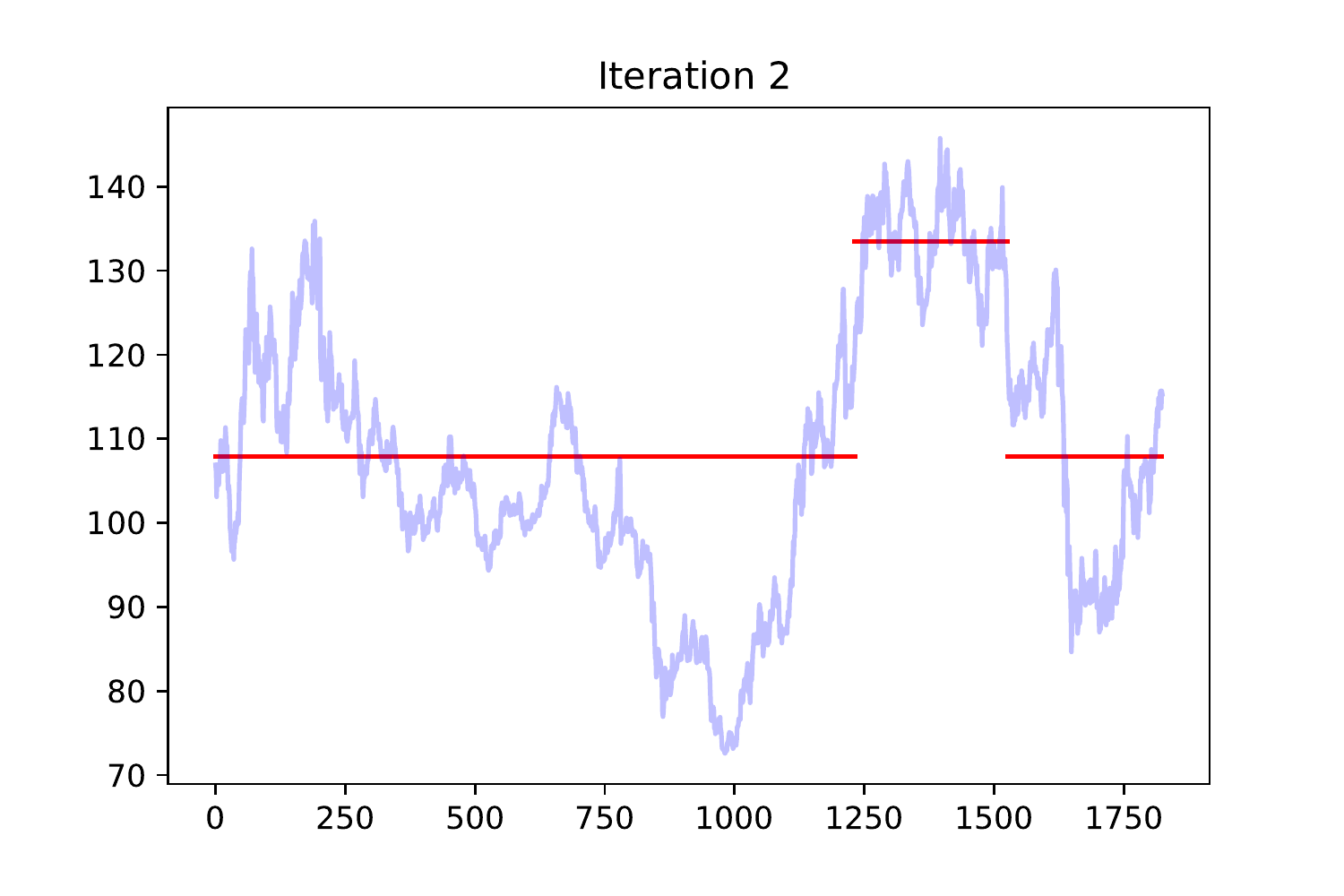}\\
    \includegraphics[scale=.33]{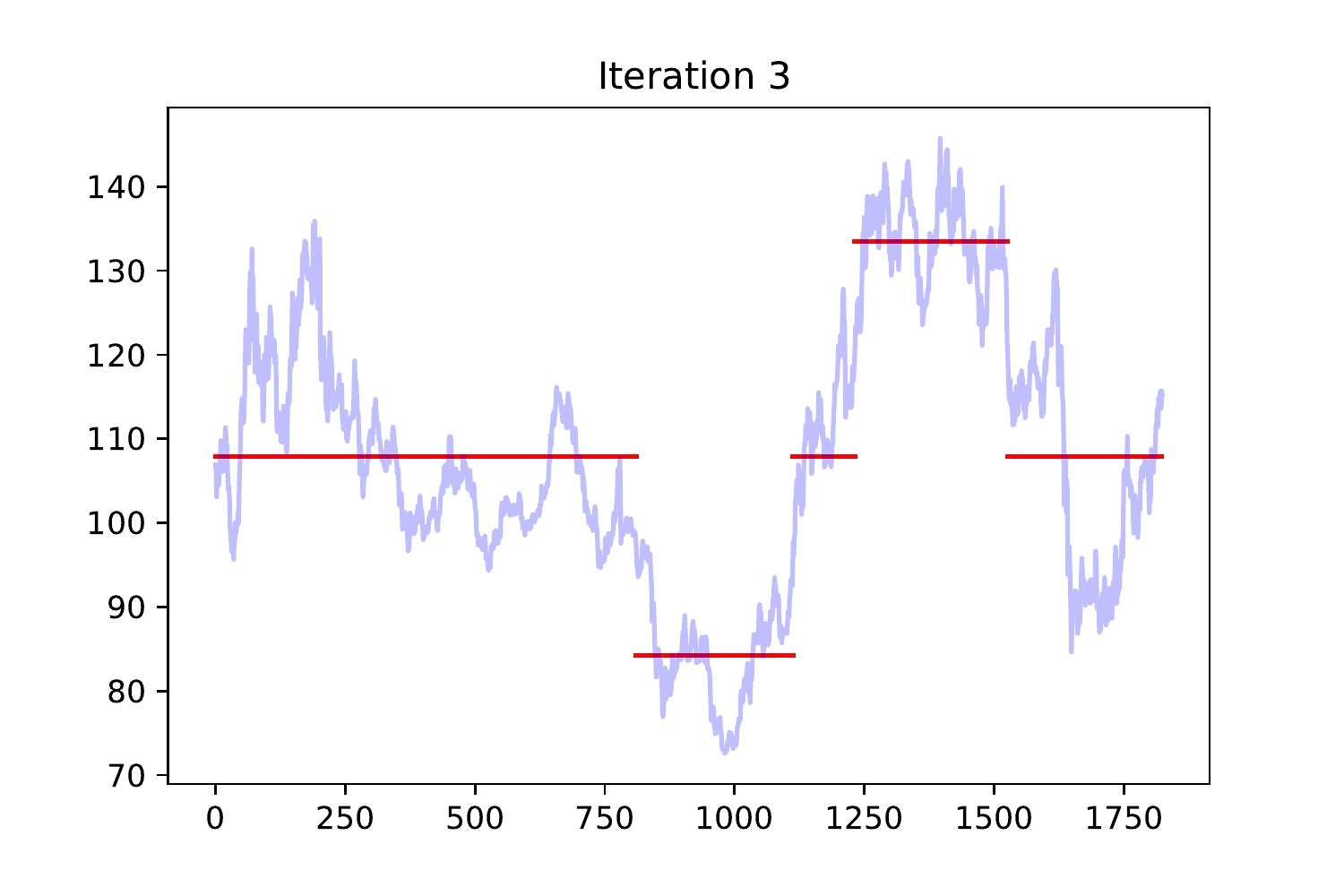}
     \includegraphics[scale=.33]{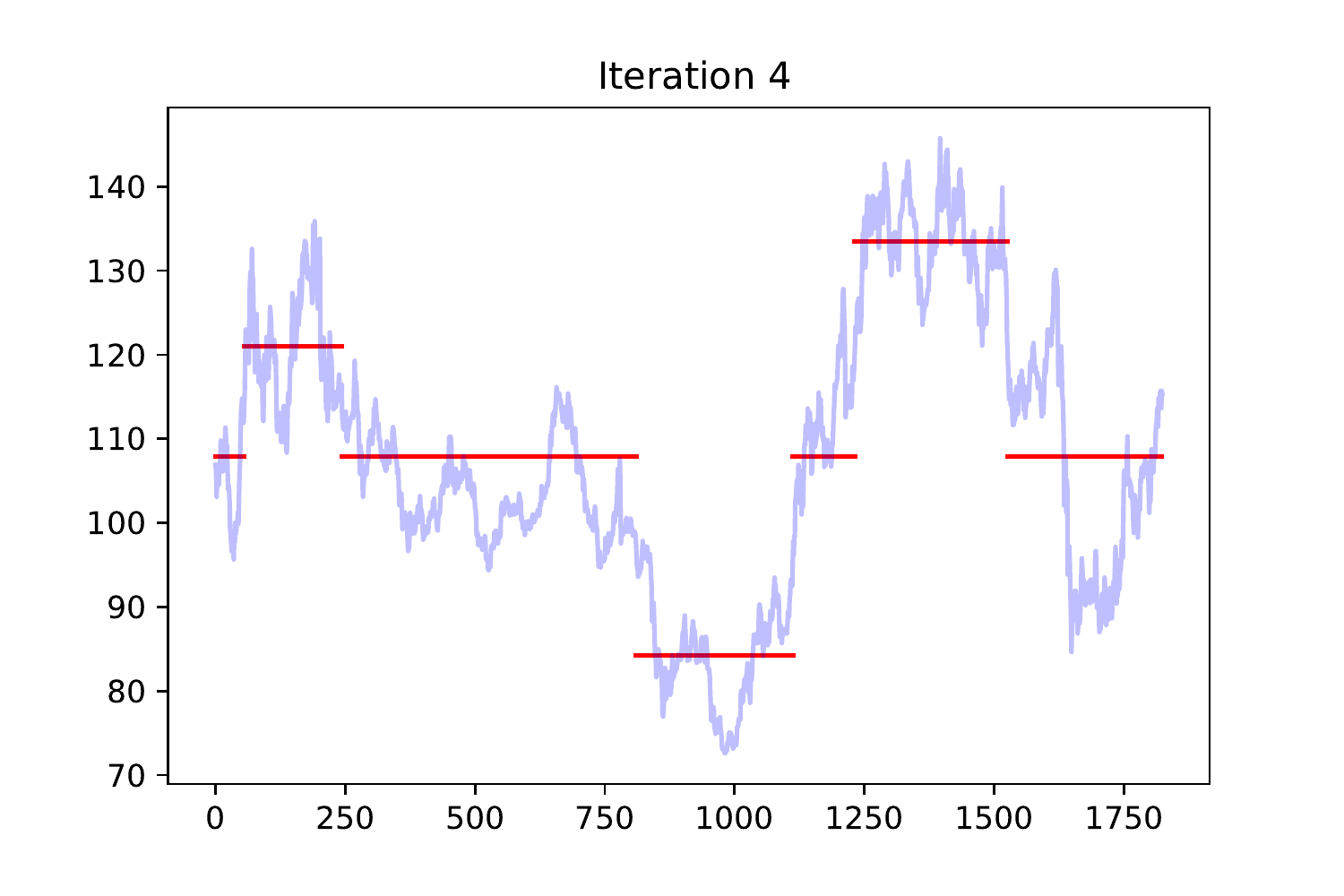}\\
  \includegraphics[scale=.33]{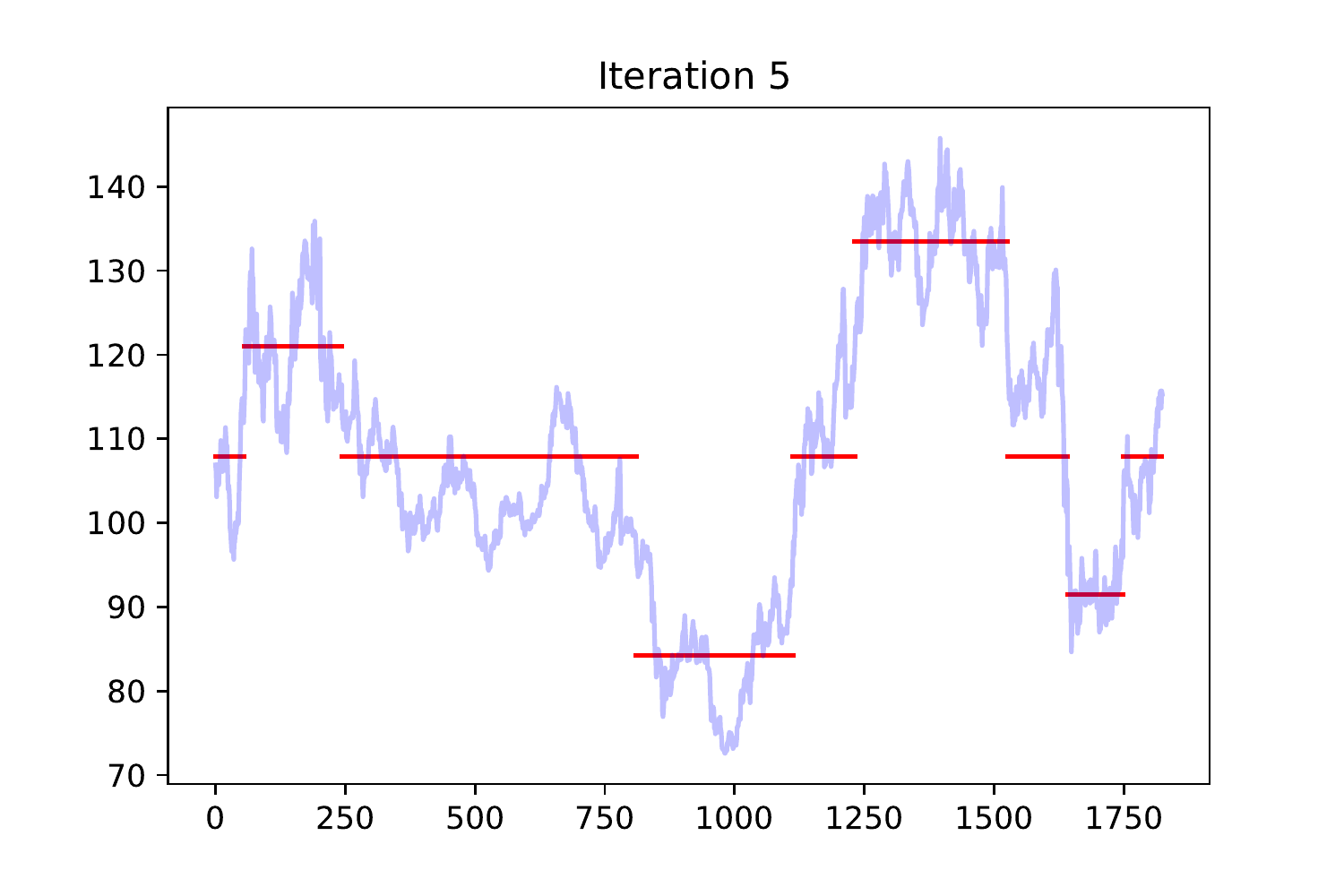}
  \includegraphics[scale=.33]{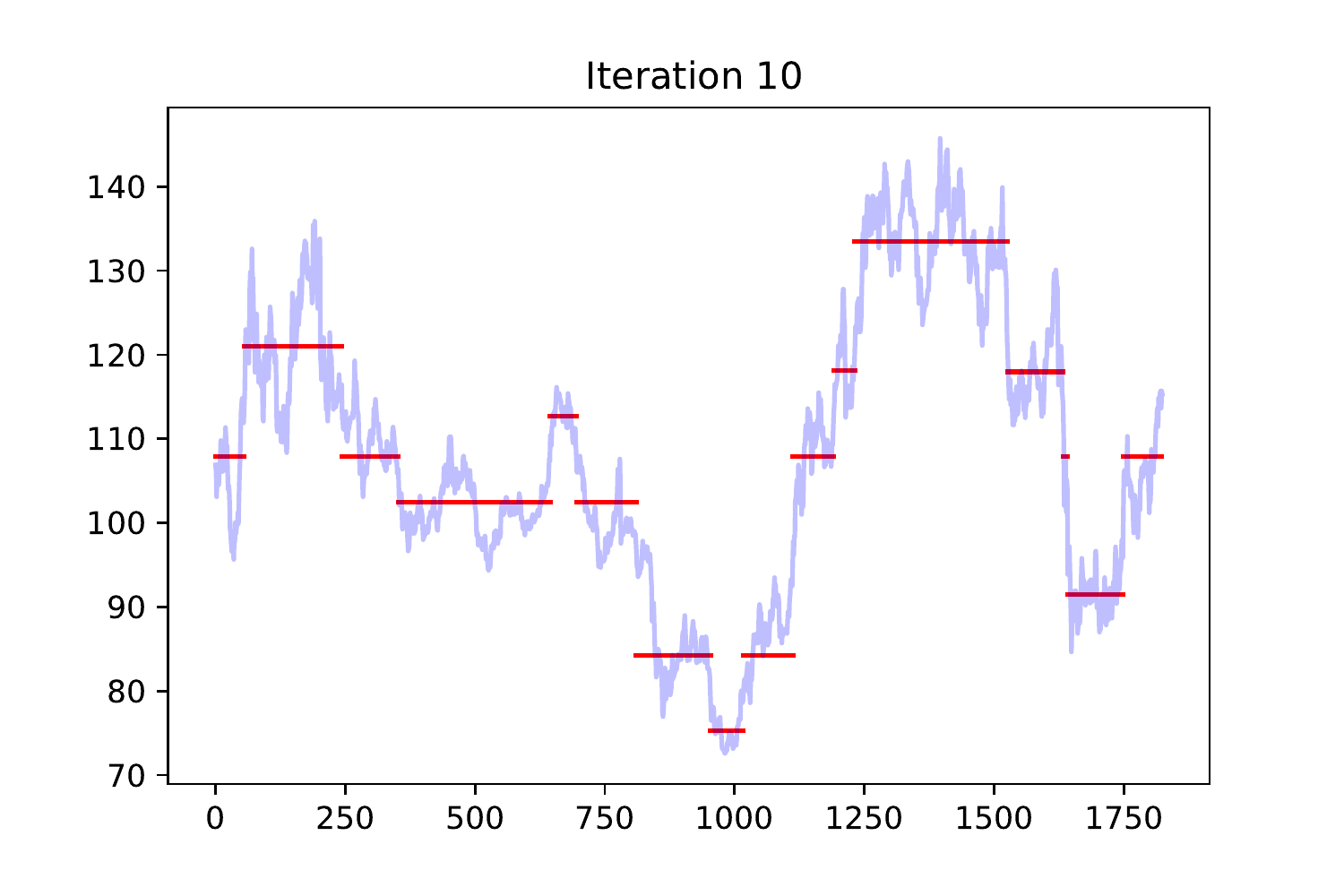}\\
   \includegraphics[scale=.33]{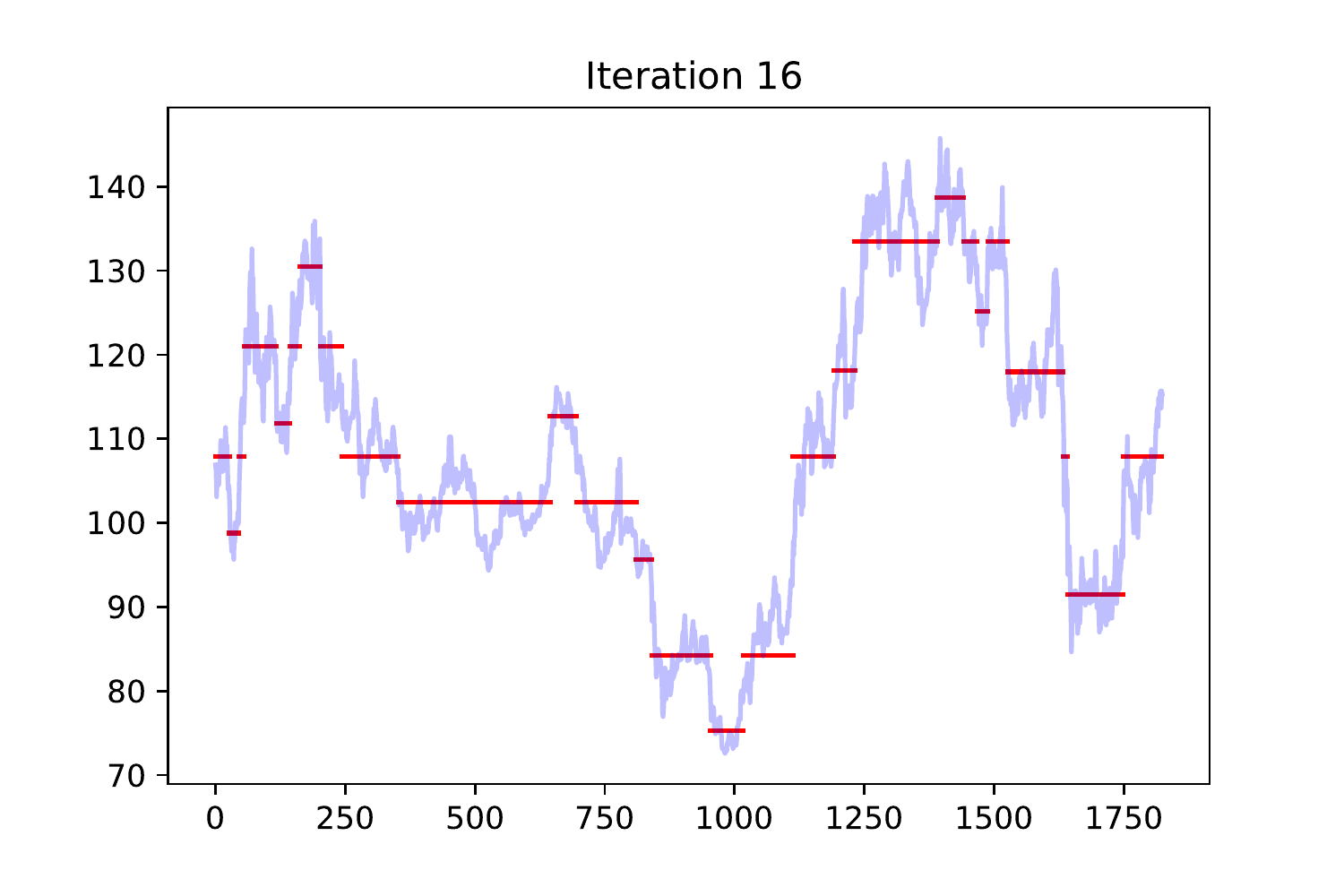}
  \includegraphics[scale=.33]{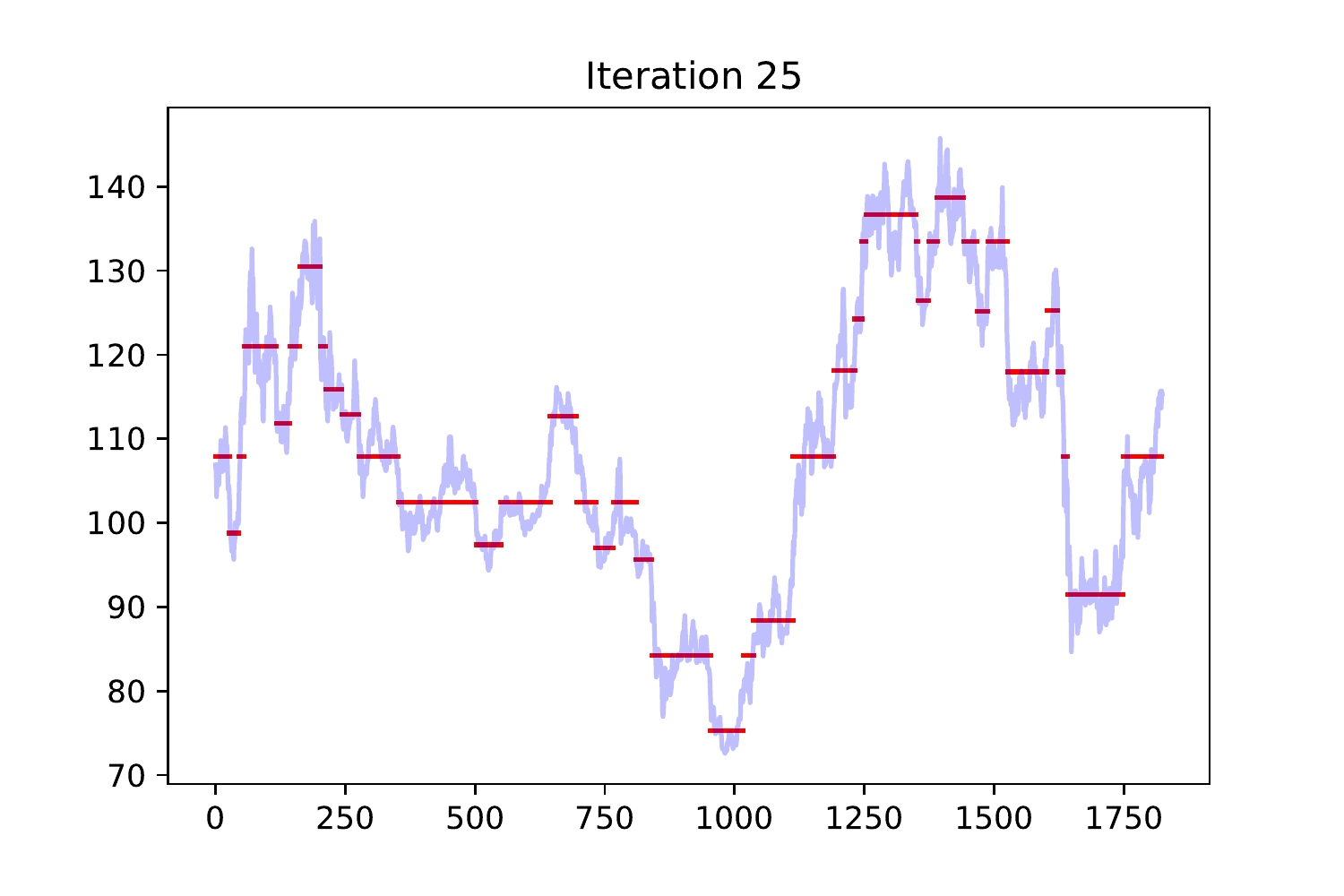}
  \caption{Step function approximation of daily copper prices across 5 years}
  \label{fig:copper}
\end{figure}

\subsection{Real Data Examples}

The rest of our examples are from real time series collected from various sources. We apply the algorithm on time series of daily copper prices, daily number of sales, and daily crude oil prices. The crude oil time series was collected from the website of the U.S. Energy Information Administration (EIA). The copper prices time series was retrieved from the Bloomberg website. The oil prices concern futures contracts and have been observed on the NYMEX market while the time series of copper concerns daily Generic 1st Futures Copper closing prices (HG1 ticker) as exchanged on the COMEX market. Price fluctuations of these two commodities has several important consequences, see e.g. \cite{mastroeni2018reappraisal,mastroeni2018co}. The data set of daily sales was taken from Kaggle, which originated as a forecasting competition. The data set included multiple products at multiple stores across 5 years from 2013 to 2017. For our demonstration, we used the data from only one store and one product to demonstrate seasonality and trend.

With the daily price of copper time series, we showcase the evolution of the approximated step function over 25 iterations of the procedure. In Figure \ref{fig:copper}, the first iteration creates a step function across the entire support of the times of the observations and sets the time mean of the data as its coefficient. As more iterations occur, the algorithm begins to update the step function to fit the data more closely, concentrating on regions of relative stability. {\color{black} While modeling with a step function has its strengths as we've shown, if the data does not originate from a process with conditional mean function that is not approximately a step function, we may miss some important persistence features that are likely to be in this time series. Between days 1500 to 1700, modeling with a step function could be an issue in approximating a step at the steep drop. It is clear that there is an inflection point, but it is not certain if there is a reinforcing effect from the previous periods during the drop, which approximating by a step function will not capture.} Regardless of the absence of bandwidth parameters of our model (being a global approximation), this is a known problem known as \emph{edge bias}; see \cite{racine} for more on local constant regression and \cite{ullah} for derivation of its edge bias. In essence there is not enough data points to make a good assessment along the drop and the data points before and after it will not be informative. 

\begin{figure}[hbt!]
\centering
  \includegraphics[scale=.5]{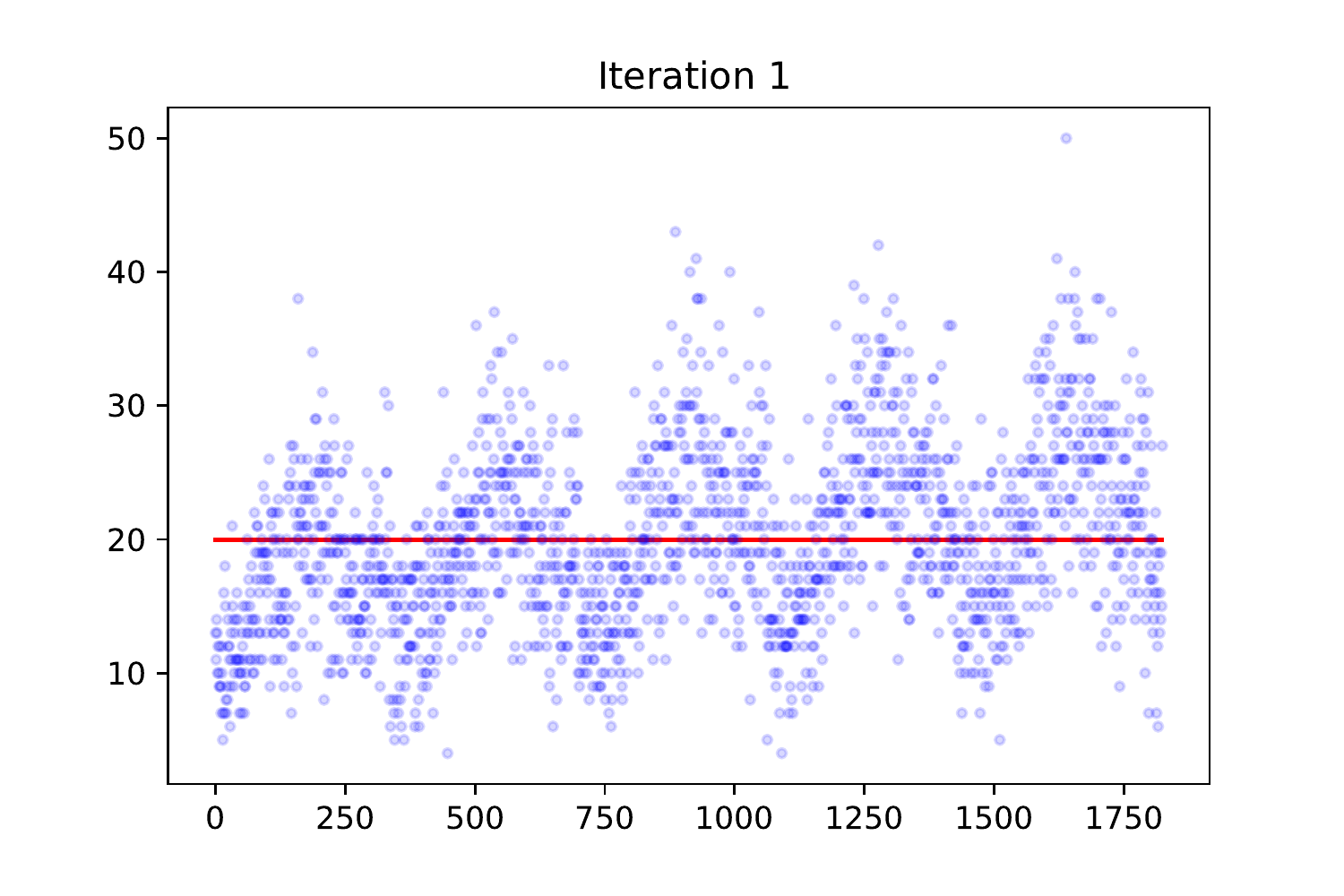}
  \includegraphics[scale=.5]{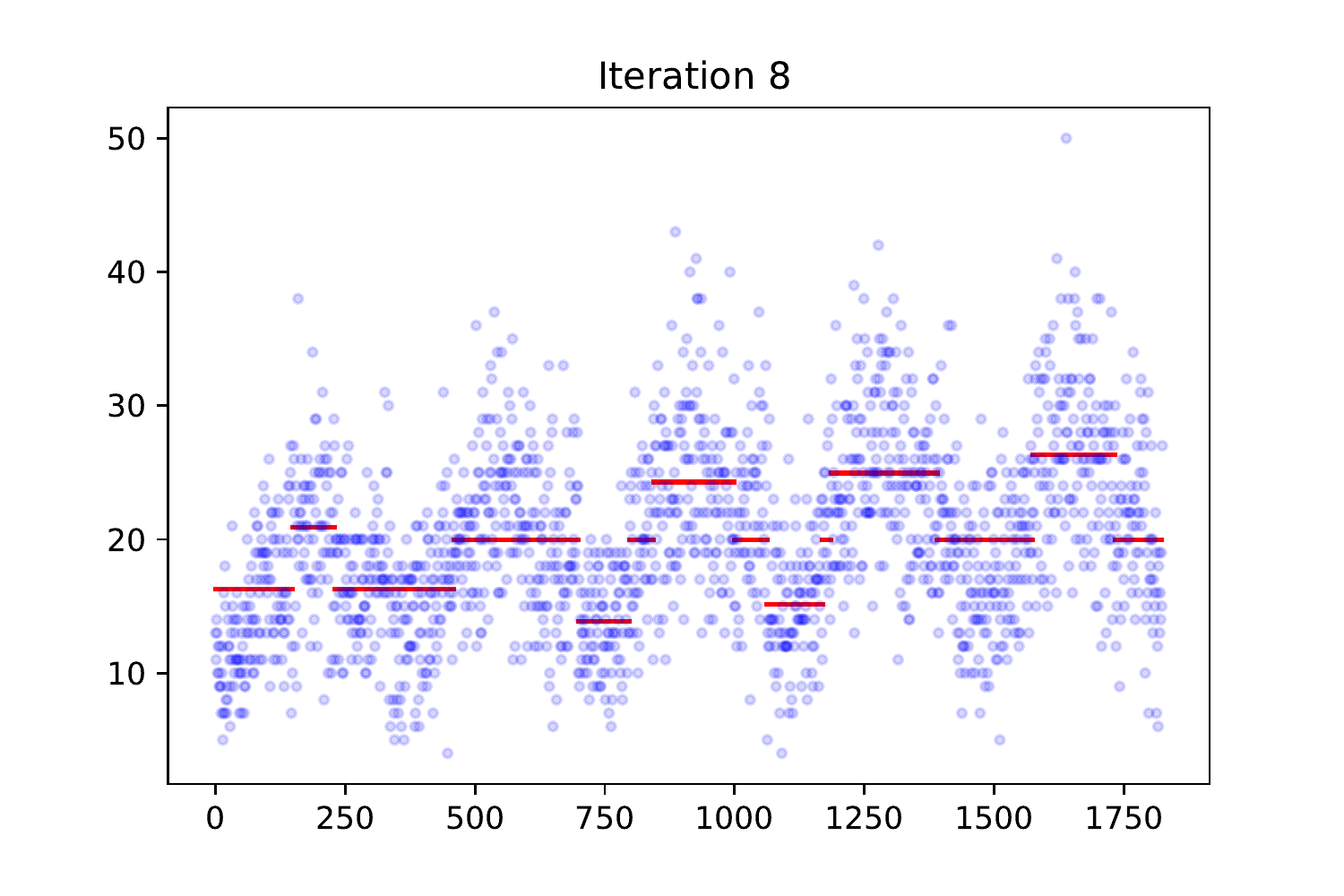}\\
    \includegraphics[scale=.5]{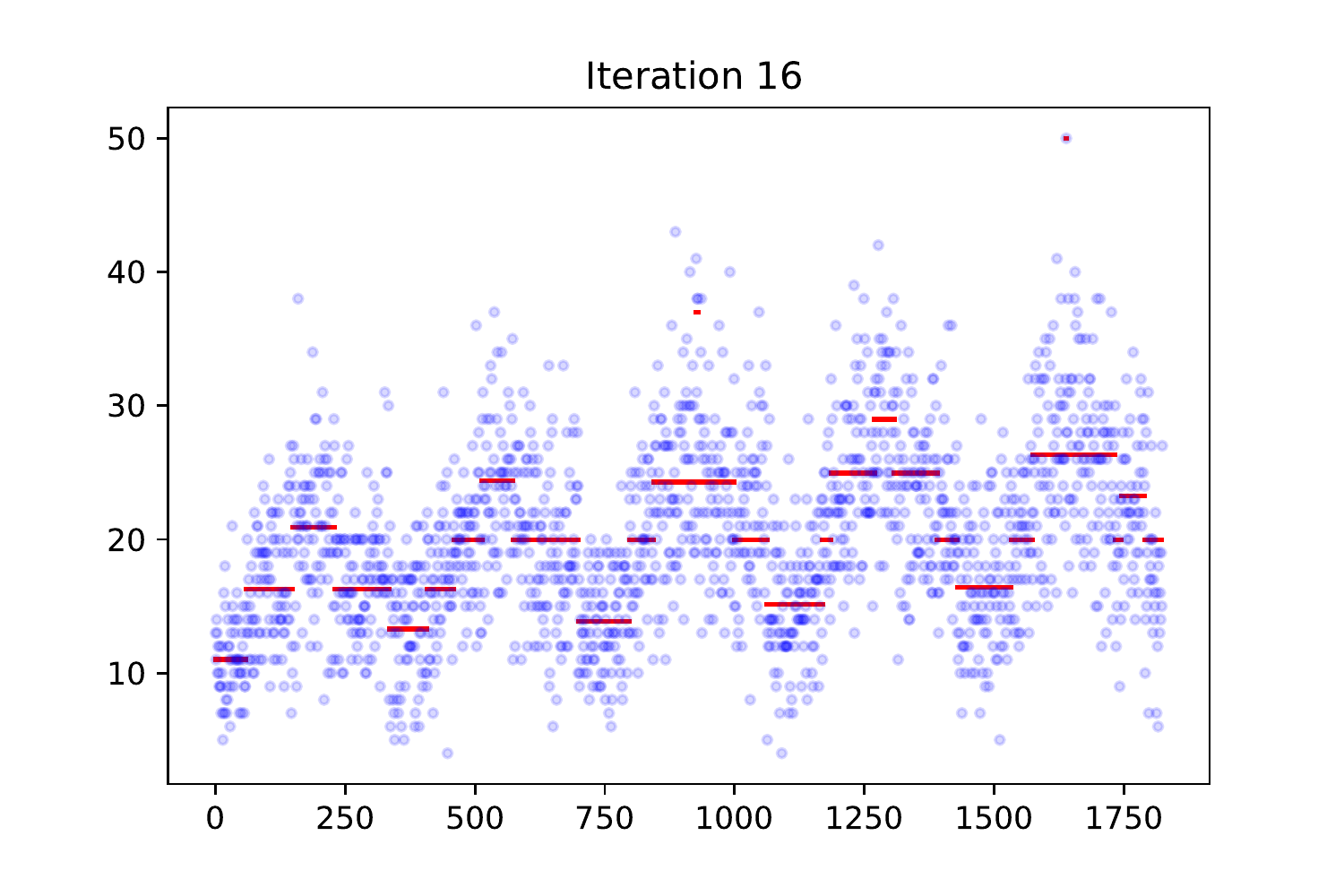}
     \includegraphics[scale=.5]{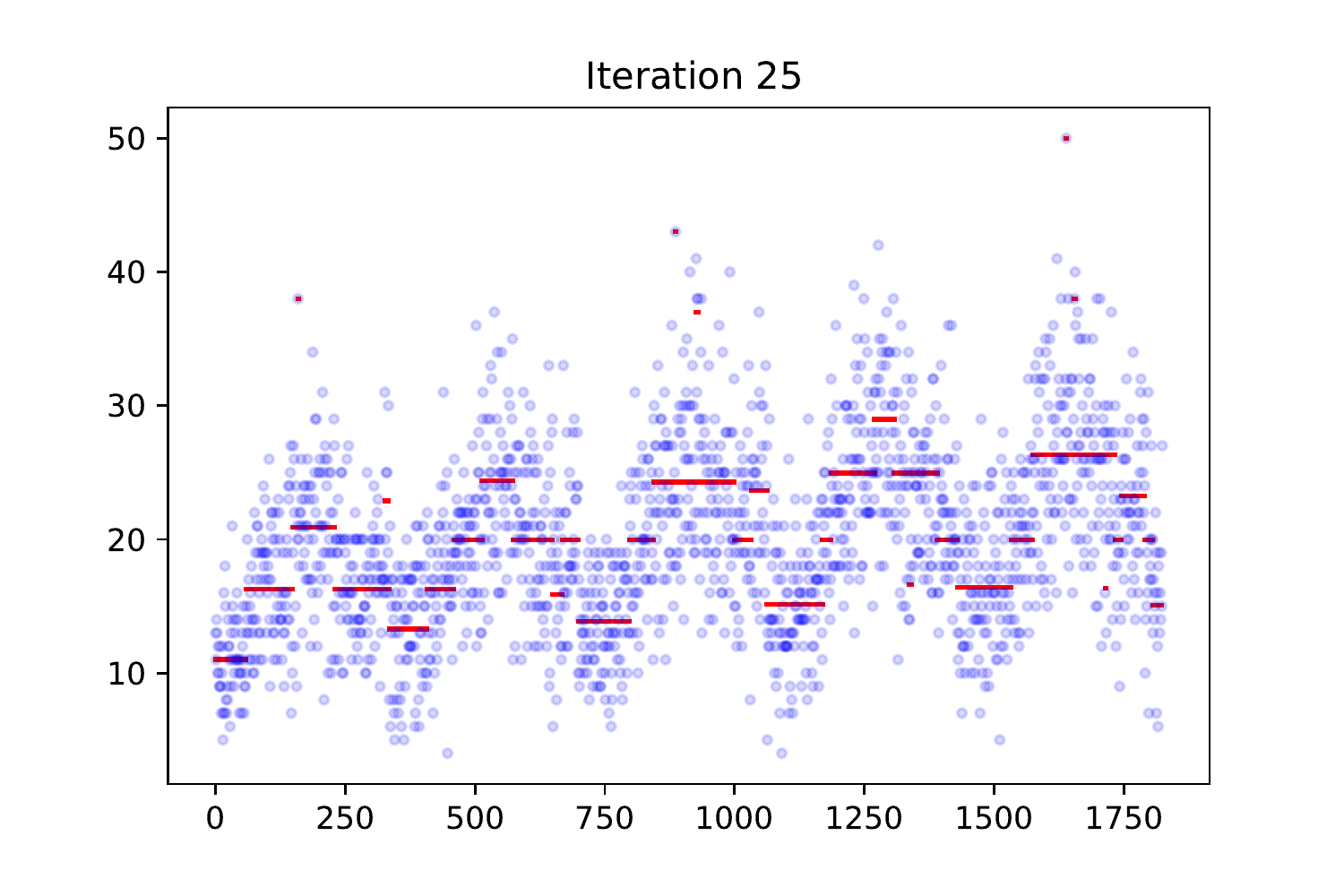}
  \caption{Step function approximation on daily sales across 5 years}
  \label{fig:kaggle}
\end{figure}

In Figure \ref{fig:kaggle}, the daily sales time series displays seasonality and an upward trend, which we do not correct for. The mounds occur between late Spring and early Autumn for reference. As before, the first iteration produces a single large step function across the time mean of the observations. Across future iterations, the steps begin to follow the seasonality and upward trend of the data. Furthermore, the extreme points begin to be fitted with their own step indicating a possible issue with the method. Indeed, at each iteration the singletons are also considered and if an observation is large enough in absolute value, it may dominate the other subsets being considered. {\color{black} However, this pattern occurs yearly and so it might be that this is a true shift and not just a consequence of the variability in the time series. In fact, the time series is quite volatile.} The presence of the volatility renders the naked eye less insightful, but through approximating with a step function we may see regularities in the data and interpret them. Across iterations, the regions become tighter and, with a risk of overfitting, become more precise such as in iteration 25.

\begin{figure}[hbt!]
\centering
     \includegraphics[scale=.5]{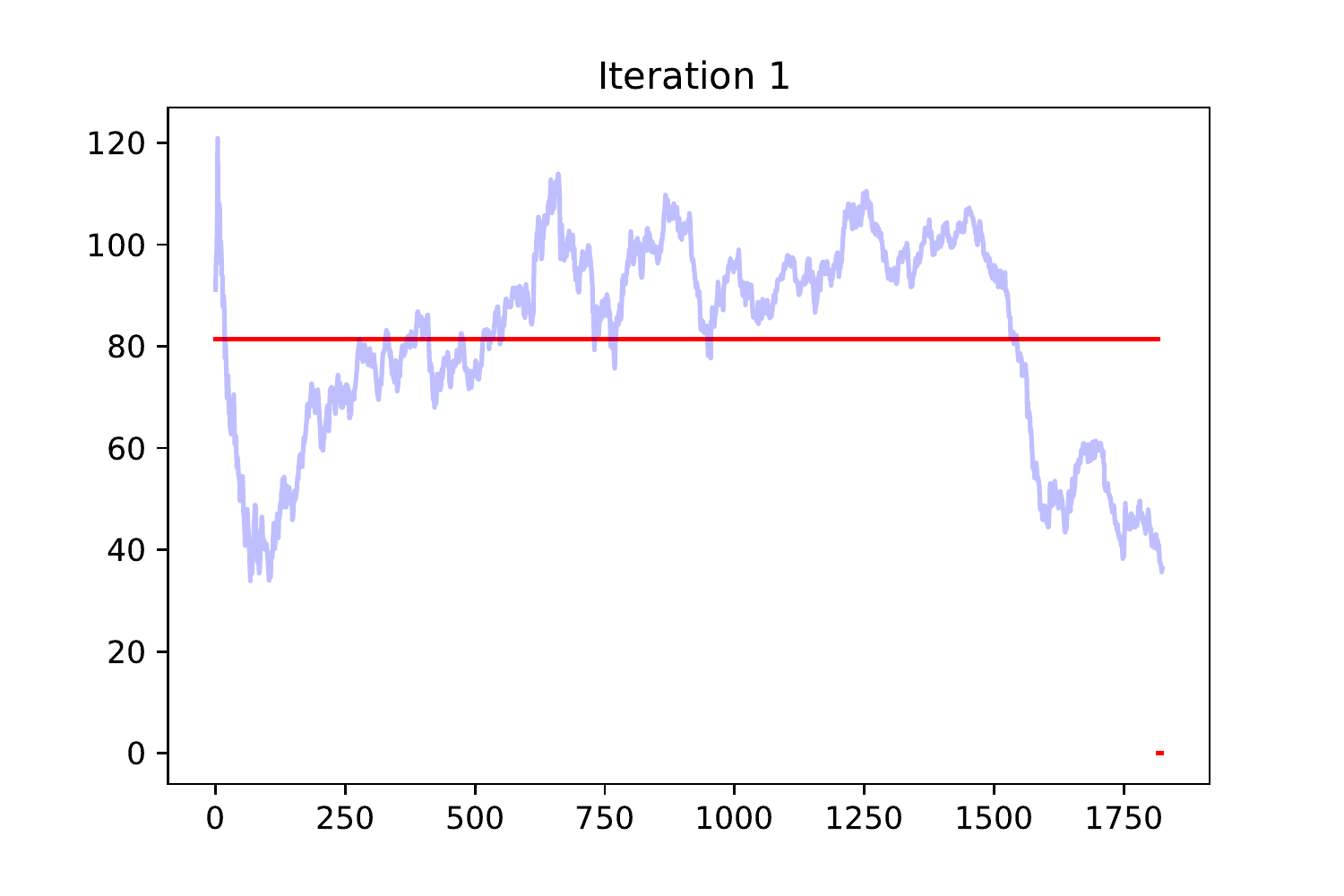}
     \includegraphics[scale=.5]{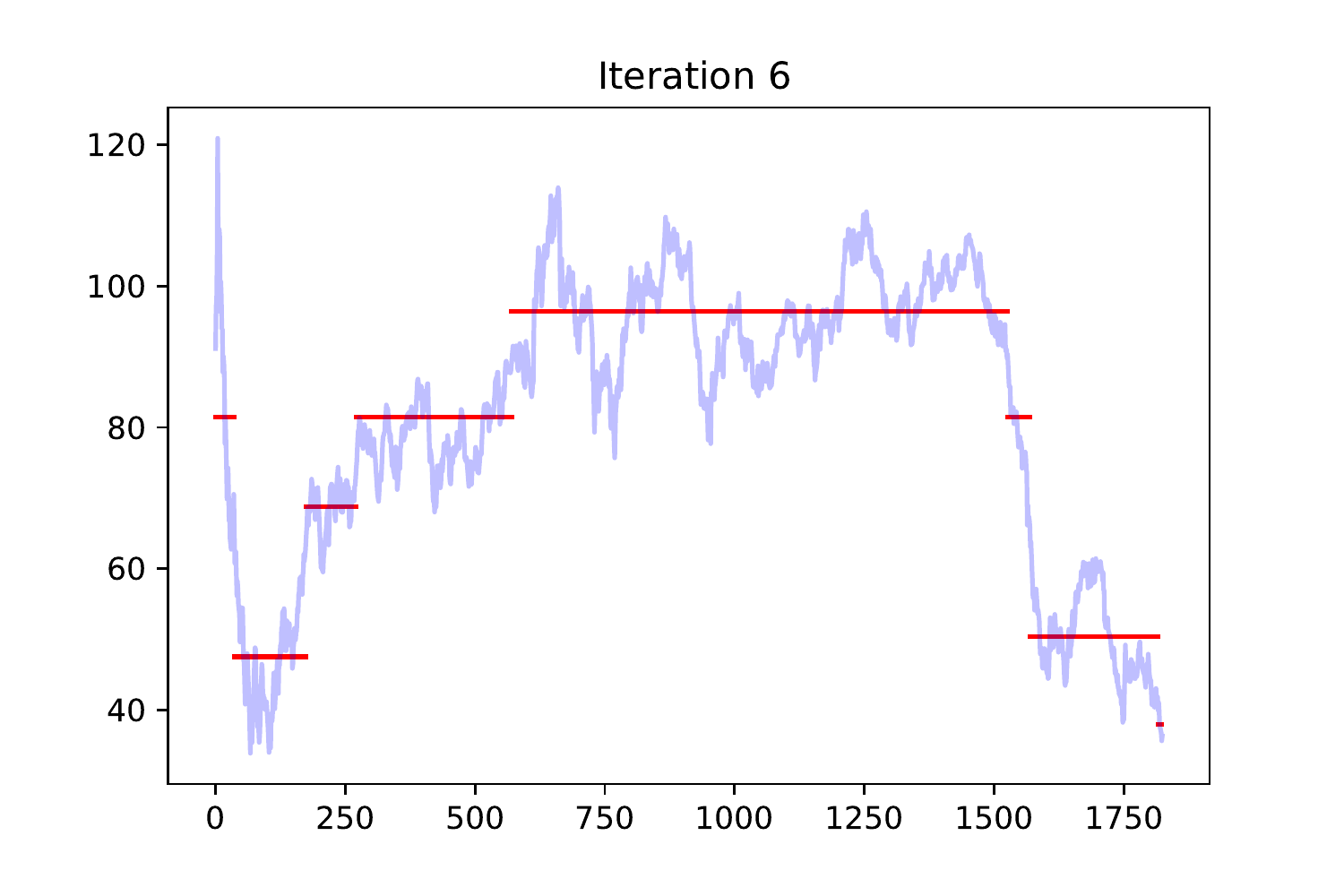}
     \includegraphics[scale=.5]{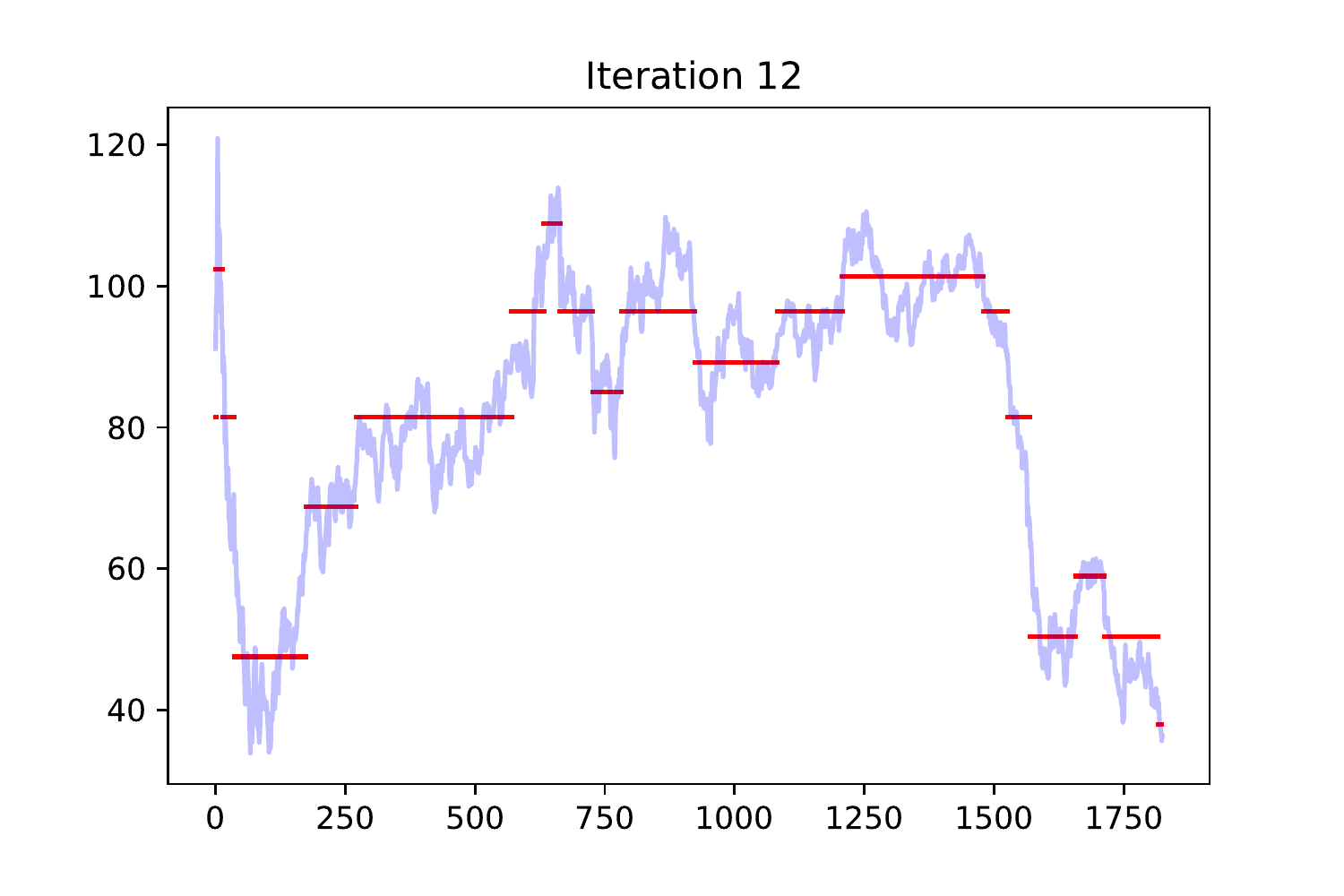}\\
    \includegraphics[scale=.53]{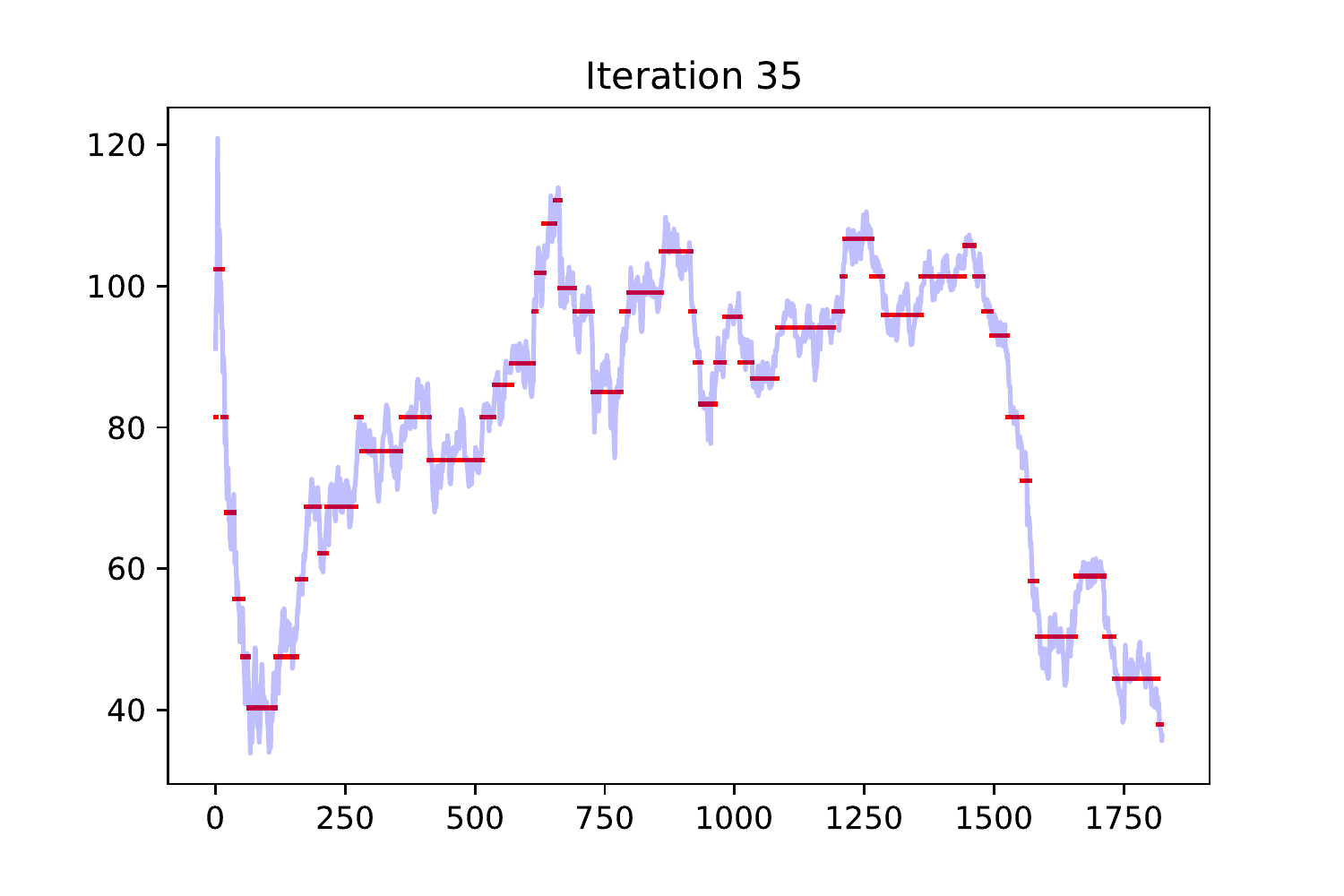}

  \caption{Step function approximation of daily petroleum prices across 5 years}
  \label{fig:petroiter}
\end{figure}

The previous examples initially fit a step function across the entire support of the data, but this is not always the case. In Figure \ref{fig:petroiter}, the daily petroleum prices time series is almost entirely approximated by a positive step in the first iteration except for a single point at the right edge, which is zero. As previously mentioned, this is likely due to the inability of the rectangle function atom to properly explain steep drops. Iteration 6 corrects this, but the discontinuity remains an issue across future iterations as this step is not a convincing fit. Moreover, the steep drop problem is clearly seen on the left portion of the time series and seemingly no amount of iterations with the rectangular atom will correct it. {\color{black} On a positive note, at iteration 35 we can start to see that the step function approximation reveals where the breaks in this time series might be. From here we can take steps to infer properties of the break process within the petrol price process.}



\section{Conclusions}
The matching pursuit approximation of signals with a waveform dictionary is an intensive procedure, but may be simplified by considering rectangular window functions. With the smaller wavelet dictionary, a set of coefficients that produce an approximate orthogonal expansion with dictionary elements can be iteratively constructed in polynomial time. In addition, the expansion is simple to interpret as it is a step function with the average of terms in the sample over the support of the different steps. For the general waveform dictionary with rectangular window, one is able to simplify the first step if a representation localized in time and frequency is desired, provided the data points are non positive or non negative.

Comparisons between the result of this method as a global procedure and local methods may be drawn. In possible applications, our method allows for dynamic calculations on partitions rather than choosing partitions or bandwidths based on a rule. Examples of rules usually involve sample estimates of the distribution parameters of the data, which depend on the sample size. This method does not rely on rules and instead allows the data to inform the researcher of the underlying structure. This is important in the clustering setting for time series, where this method is highly adapted to clustering observations when clusters are of constant mean. For example, the popular $k$-means algorithm requires knowledge of the true number of clusters while our algorithm does not.

The assumption \eqref{def-f} on the signal's regression function is reasonable for time series that are measured in fixed length intervals from a continuous process. The discontinuities from the wavelet dictionary with rectangular atoms shows promise to detect breaks in the time series so long as the true regression function is approximately a step function due to the property of being localized in time. Related to this it shows the ability to distinguish periods of regularity in the data even if the signal is not piecewise constant. Still, we require a derivation of statistical properties and sample performance of this method before we can make any meaningful comparisons. Nonetheless, we have established the mathematical properties, facilitating further developments with matching pursuit approximations on data.

\bibliographystyle{unsrt}  
\bibliography{references}

\end{document}